\documentclass{ifacconf}

\makeatletter
\let\IFACoriginalssect\@ssect
\makeatother

\usepackage{natbib}

\usepackage[
    colorlinks=true,
    linkcolor=blue,
    citecolor=blue,
    urlcolor=blue,
    bookmarks=false
]{hyperref}

\makeatletter
\let\@ssect\IFACoriginalssect
\makeatother

\usepackage{graphicx}      
\usepackage{mathtools}

\usepackage{amssymb, amsmath, bm}
\usepackage{amsfonts}
\usepackage{xcolor}
\usepackage{balance}

\usepackage[labelformat=simple]{subcaption}

\usepackage{enumitem}
\newcommand\numberthis{\addtocounter{equation}{1}\tag{\theequation}}

\usepackage{chngcntr}
\makeatletter 
\newcounter{parentassumption} 

\makeatother 

\makeatletter
\@ifundefined{proof}{%
    \newenvironment{proof}
    {\par\noindent\textit{Proof.}\ }
    {\hfill$\square$\par}
}{}
\makeatother

\allowdisplaybreaks

\newtheorem{definition}{Definition}
\newtheorem{assumption}{Assumption}
\newtheorem{lemma}{Lemma}
\newtheorem{remark}{Remark}
\newtheorem{theorem}{Theorem}

\begin{document}
\begin{frontmatter}

\title{Defense Filters for Robust Learning of Partially Observed Dynamical Systems\thanksref{footnoteinfo}} 

\thanks[footnoteinfo]{This work was supported by the U. S. Army Research Laboratory and the U. S. Army Research Office under Grant W911NF2010219, Office of Naval Research under Grant N000142412673, and NSF.}

\author[First]{Jihun Kim} 
\author[First]{Javad Lavaei} 

\address[First]{Department of Industrial Engineering and Operations Research, \\University of California, Berkeley, CA 94720 USA \\(e-mail: \{jihun.kim, lavaei\}@berkeley.edu).}

\begin{abstract}                
Robust estimation of unknown system dynamics from partial observations under attack is a core challenge in learning for control, increasingly critical in multi-agent networks subject to adversarial data poisoning. Existing methods either require highly restrictive assumptions on the adversary or yield weak error bounds. In this paper, we consider identifying partially observed linear dynamical systems subject to adversarial attacks with an attack probability $p < 0.5$ at each time. We propose a defense-filtering framework in which a system designer leverages knowledge of the observation matrix to construct a defense filter satisfying full column rank and subspace separability conditions, which enables the evaluation of a single-attack scenario against observed data. Using this framework, we analyze the constrained $\ell_2$-norm estimator and establish finite-sample error bounds for recovering the Markov parameter matrix $G^*$. This is the first approach to provide provable system identification guarantees via a defense filter for partially observed systems subject to harsh attack environments where the attack probability approaches $0.5$.
\end{abstract}

\begin{keyword}
Defense Filters, Robust System Identification, Partially Observed Systems.
\end{keyword}

\end{frontmatter}
\section{Introduction}
Learning the dynamics of partially observed systems under adversarial attacks presents a fundamental challenge in cyber-physical security \citep{pasqualetti2013cyber, showkatbakhsh2016identification} and the security of multi-agent networks \citep{chen2019resilient, zhang2021datadriven}. When an adversary injects arbitrary attack signals into system dynamics without structural restrictions, identifying the underlying state-space parameters becomes inherently ill-posed. Existing studies have thus considered adversaries with restricted access to sensors or actuators \citep{fawzi2014secure, showkatbakhsh2020securing}. Such restrictions are motivated by practical security architectures in which certain system components are physically protected or inaccessible to the adversary,  also illustrated in power-grid security  \citep{liu2009false}.
Motivated by these considerations, we investigate a \textit{defense filter} architecture that restricts adversarial injections to a lower-dimensional subspace. In particular, adversarial attacks can enter through a pre-determined set of channels represented by a defense filter. This formulation captures the possibility of selectively securing system components while preserving the interfaces necessary for normal operation, which is consistent with resource-constrained security design \citep{teixeira2015resource, pasqualetti2015control}.

Our recent works \citep{kim2025bridging, kim2025system} introduced frameworks for estimating the unknown Markov parameter matrix $G^*$ from attack-corrupted trajectories using an unconstrained $\ell_2$-norm or $\ell_1$-norm estimator. However, a major theoretical limitation of these works is that their finite-sample error bounds rely on a diminishing attack probability as the target estimation error decreases. In real-world adversarial environments,  attacks occur with a non-vanishing rate, which makes diminishing attack assumptions impractical.

In this paper, we address this gap by analyzing the constrained $\ell_2$-norm estimator under any fixed attack probability $p < 0.5$.  Our contributions are summarized as follows:
\begin{itemize}[leftmargin=0.4cm]
    \item We formulate geometric conditions on the defense filter:  full column rank and subspace separability that allow a single-attack scenario to be tested against observations.
    \item We establish rigorous finite-sample error bounds for recovering the Markov parameter matrix $G^*$, proving that accurate estimation is achievable  under  adversarial attacks with any $p<0.5$.
\end{itemize}

\textbf{Notation.} Given functions $u$ and $v$, $u(x) = \mathcal{O}(v(x))$ implies $u(x) \le c_1 v(x)$ for some $c_1>0$, and $u(x) = \Omega(v(x))$ implies $u(x) \ge c_2 v(x)$ for some $c_2>0$. The notations $\tilde{\mathcal{O}}$ and $\tilde{\Omega}$ hide logarithmic factors in $\mathcal{O}$ and $\Omega$, respectively. Let $\mathcal{N}(\mu,\Sigma)$ denote the Gaussian distribution with mean $\mu$ and covariance $\Sigma$. For a matrix $A$, let $\|A\|_F$ denote its Frobenius norm and $\|A\|_2$ denote its spectral norm. $I_n$ denotes the $n \times n$ identity matrix  and $\operatorname{Im}(\cdot)$ denotes the column space of a matrix.

\section{Problem Formulation}\label{sec:probform}

Consider a partially observed discrete-time system represented as
\begin{equation}
\begin{split}\label{sysd}
    x_{t+1} &= A x_t + Bu_t+Ew_t,\\y_t&= Cx_t + Du_t, \hspace{10mm}t=0,1,\dots,
    \end{split}
\end{equation}
where $x_t \in \mathbb{R}^n$ is the state, $u_t \in \mathbb{R}^m$ is the control input, and $y_t \in \mathbb{R}^r$ is the observation output at time $t$. The system matrices $A \in \mathbb{R}^{n \times n}$, $B \in \mathbb{R}^{n \times m}$, and $D \in \mathbb{R}^{r \times m}$ are unknown.

The observation matrix $C \in \mathbb{R}^{r \times n}$ is assumed to be known. Based on $C$, the system designer constructs a defense filter matrix $E \in \mathbb{R}^{n \times e}$. The vector $w_t \in \mathbb{R}^e$ represents a dimensionally restricted adversarial attack injected into the state update. At each time step $t$, an attack time occurs with probability $p < 0.5$,  and 
$w_t$ is identically zero when the system is not under attack. We allow $w_t$ to be completely arbitrarily chosen by an adversary  at the attack times.

The system designer constructs the defense filter $E$ to first ensure the full column-rank condition:
\begin{assumption}[Full column-rank]\label{as:fullcolumn1}
   Suppose that
   \begin{align*}
       \operatorname{rank}(CE) = e.
   \end{align*}
\end{assumption}
 A necessary condition for Assumption~\ref{as:fullcolumn1} is $r \ge e$, meaning the attack dimension cannot exceed the  observation output dimension.

In this paper, our objective is to accurately recover the underlying system dynamics $(A, B, D)$ under persistent excitation from the control inputs $u_t$. To establish the input--output relationship, we recall the definition of the Markov parameter matrix:
\begin{definition}[Markov parameter matrix of order $k$]\label{markov}
 ~ From the true system $(A,B,C,D)$, the Markov parameter matrix required to recover the input-output mapping is denoted as $G^*$ and defined by 
    \begin{equation}\label{gstar}
        G^* = [D~CB~ CAB~ \dots ~ CA^{k-2}B].
    \end{equation}
    Note that the size of $G^*$ depends on $k$; however, we omit the explicit dependence on $k$ in the notation for simplicity.
\end{definition}

An accurate estimate of the Markov parameter matrix allows for the recovery of the system matrices $A, B,$ and $D$ up to a similarity transformation. In particular, subspace identification techniques such as the Ho-Kalman algorithm (\cite{hokalman1966kalman}) have been shown to yield accurate $k$-th order balanced realizations from finite-sample estimates (\cite{sarkar2021finite}) once $G^*$ is sufficiently accurate. Consequently, our objective reduces to obtaining a reliable estimate of $G^*$.

To construct an estimate of $G^*$, we  solve the constrained optimization problem:
\begin{equation}\label{l2est}
    \min_{\substack{H \in \mathbb{R}^{e \times r} \\ J \in \mathbb{R}^{e \times mk}}} \sum_{t\in\mathcal{T}} \Bigr\| H y_t - J \mathbf{U}_t^{(k)} \Bigr\|_2 
    \quad \text{s.t.} \quad HCE = I_e,
\end{equation}
where
$\mathcal{T}=\{k-1,\dots, T+k-2\}$ and
$\mathbf{U}_t^{(k)} = \begin{bmatrix} u_t^T & u_{t-1}^T & \dots & u_{t-k+1}^T \end{bmatrix}^T \in \mathbb{R}^{mk}$ denotes the stacked control inputs over time $t,t-1,\dots, t-k+1$.

The following two assumptions ensure that system states do not explode over time.
\begin{assumption}[System Stability]\label{stability}
There exist constants $\psi > 0$ and $\rho \in [0, 1)$ such that $\|A^t\|_2 \le \psi \cdot \rho^t$ for all $t \ge 0$.
\end{assumption}

\begin{assumption}[Sub-Gaussian norm]\label{maxnorm}
 Define a filtration {$\mathcal{F}_t = \bm{\sigma}\{x_0, x_1, \dots, x_t\}$.}
    There exists $\eta >0$ such that
  {   $x_0 = (x_{0,i})_{i=1}^n$ and $w_t = (w_{t,i})_{i=1}^e$ satisfies $\max_{i\in\{1,\dots,n\}}\|x_{0,i}\|_{\psi_2} \leq \eta$ and $\max_{i\in\{1,\dots,e\}}\|w_{t,i}\;|\;\mathcal{F}_t\|_{\psi_2} \leq \eta$ for all $t \geq 0$.}
\end{assumption}

We define the probabilistic attack model using independent Bernoulli variables, under the mild condition that the attack probability at each step is strictly less than half.
\begin{assumption}[Probabilistic Attack]\label{null}
 $w_t$ is an attack at each time $t$ with probability $p<0.5$ conditioned on $\mathcal{F}_t$; \textit{i.e.}, there exists a sequence $(\xi_t)_{t\geq 0}$ of independent $\text{Bernoulli}(p)$ variables such that 
    $\{\xi_t = 0\} \subseteq \{w_t=0\}$ for all $t\geq 0.$
   Under this restriction on attack times, $w_t$ can be chosen arbitrarily by an adversary with access to $\mathcal{F}_t$ at every attack time $t\ge 0$.
\end{assumption}

By designing the control inputs $u_0, u_1, \dots$ and given the associated observation trajectory $y_0, y_1,\dots$,
our goal is to accurately approximate the input–output mapping $G^*$; to identify a Markov parameter matrix of order $k$. To this end, {for each $s\in\{1,\dots,k-1\}$,
consider the following relationship from $u_t,\dots, u_{t-k+1}$ to $y_t$ derived from \eqref{sysd}:
    \begin{subequations}\label{relationship}
         \begin{align}
    y_t &= G^*\cdot [u_t^T~ u_{t-1}^T ~ \dots~ u_{t-k+1}^T]^T \numberthis\label{firstline}\\ &\hspace{2mm}+ [CE~ CAE ~ \dots ~ CA^{s-1}E] \cdot[w_{t-1}^T~ \dots ~ w_{t-s}^T]^T\numberthis\label{secondline}\\&\hspace{2mm}+ [CA^{s}E~  \dots ~ CA^{k-2}E] \cdot[w_{t-s-1}^T~ \dots ~ w_{t-k+1}^T]^T\numberthis\label{thirdline}\\&\hspace{2mm}+ CA^{k-1}x_{t-k+1}, \numberthis\label{fourthline}
    \end{align}
    \end{subequations}
    where the term in \eqref{thirdline} is taken to be zero when $s = k-1$.
For the sake of convenience, we define \eqref{secondline}-\eqref{fourthline} as
\begin{subequations}
\begin{align}
    &v_t^{(s)} := [CE~ CAE ~ \dots ~ CA^{s-1}E] [w_{t-1}^T~ \dots ~ w_{t-s}^T]^T\numberthis\label{defv}\\
    &\alpha_t^{(s)} := [CA^{s}E~  \dots ~ CA^{k-2}E] [w_{t-s-1}^T~ \dots ~ w_{t-k+1}^T]^T \numberthis\label{defalpha}\\
   &\beta_t:=  CA^{k-1}x_{t-k+1} \numberthis\label{defbeta}
\end{align}
\end{subequations}
}
The next lemma bounds the tail terms $\alpha_t^{(s)}$ and $\beta_t$.
\begin{lemma}\label{alphabetabound}
    Suppose that Assumptions \ref{stability} and \ref{maxnorm} hold.  Let the control inputs $\{u_t\}_{t=0}^{T+k-2}$ be independent with $u_t\sim N(0, \sigma^2 I_m)$. Then,  given $\delta\in(0,1]$, when $T= \Omega(\log(\frac{1}{\delta }))$,
    \begin{align*}
    \sum_{t\in \mathcal{T}} \|\alpha_t^{(s)}\|_2 +\|\beta_t\|_2 \le  \mathcal{O}\Bigr({\rho^{s}}\|C\|_2 MT\Bigr)
\end{align*}
holds with probability at least $1-\delta$, where $M:= \frac{\eta {\sqrt{n}}+ \eta\sqrt{e}\|E\|_2+\sigma \sqrt{m} \|B\|_2}{1-\rho}$.
\end{lemma}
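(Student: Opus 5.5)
We bound $\alpha_t^{(s)}$ and $\beta_t$ directly from their definitions, using the stability bound of Assumption~\ref{stability}, and control the random magnitudes through sub-Gaussian concentration. For $\alpha_t^{(s)}$ in \eqref{defalpha}, the triangle inequality gives
\begin{align*}
\|\alpha_t^{(s)}\|_2 \le \sum_{j=s}^{k-2} \|C\|_2\|A^{j}\|_2\|E\|_2\|w_{t-j-1}\|_2 \le \psi\|C\|_2\|E\|_2 \sum_{j=s}^{k-2}\rho^{j}\|w_{t-j-1}\|_2 .
\end{align*}
Similarly, $\|\beta_t\|_2 \le \psi\|C\|_2\rho^{k-1}\|x_{t-k+1}\|_2 \le \psi\|C\|_2\rho^{s}\|x_{t-k+1}\|_2$, since $s\le k-1$. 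Summing over $t\in\mathcal{T}$ and exchanging the order of summation, each $\|w_\tau\|_2$ appears with total weight at most $\sum_{j\ge s}\rho^j=\rho^s/(1-\rho)$. Hence
\begin{align*}
\sum_{t\in\mathcal{T}}\|\alpha_t^{(s)}\|_2 \le \frac{\psi\rho^s\|C\|_2\|E\|_2}{1-\rho}\sum_{\tau=0}^{T+k-2}\|w_\tau\|_2 .
\end{align*}
It therefore suffices to show that, with probability at least $1-\delta$, $\sum_\tau\|w_\tau\|_2=\mathcal{O}(\eta\sqrt{e}\,T)$ and $\sum_{t}\|x_{t-k+1}\|_2=\mathcal{O}(MT)$.

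\textbf{Bounding $\|w_\tau\|_2$.} By Assumption~\ref{maxnorm}, conditioned on $\mathcal{F}_\tau$ each coordinate of $w_\tau$ is $\eta$-sub-Gaussian. Hence $\|w_\tau\|_2$ is conditionally sub-Gaussian with $\mathbb{E}[\|w_\tau\|_2\mid\mathcal{F}_\tau]\le c\eta\sqrt{e}$, and its centered version has $\psi_2$-norm $\mathcal{O}(\eta\sqrt{e})$. Applying Azuma--Hoeffding (a sub-Gaussian martingale concentration) to $\sum_\tau(\|w_\tau\|_2-\mathbb{E}[\cdot\mid\mathcal{F}_\tau])$ gives a deviation of $\mathcal{O}(\eta\sqrt{e}\sqrt{T\log(1/\delta)})$. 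This is $\mathcal{O}(\eta\sqrt{e}\,T)$ once $T=\Omega(\log(1/\delta))$. The adversary may choose $w_\tau$ arbitrarily, but only within the $\mathcal{F}_\tau$-conditional sub-Gaussian envelope, so this bound is adversary-proof.

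\textbf{Bounding the states.} Unrolling the dynamics gives
\begin{align*}
x_t=A^tx_0+\sum_{i<t}A^{t-1-i}(Bu_i+Ew_i),
\end{align*}
so
\begin{align*}
\|x_t\|_2\le\psi\Big(\rho^t\|x_0\|_2+\sum_{i<t}\rho^{t-1-i}\big(\|B\|_2\|u_i\|_2+\|E\|_2\|w_i\|_2\big)\Big).
\end{align*}
Summing over $t$ and again exchanging sums, each of $\|x_0\|_2$, $\|u_i\|_2$, and $\|w_i\|_2$ picks up a factor of at most $1/(1-\rho)$. This yields
\begin{align*}
\sum_t\|x_t\|_2\le\frac{\psi}{1-\rho}\Big(\|x_0\|_2+\|B\|_2\sum_i\|u_i\|_2+\|E\|_2\sum_i\|w_i\|_2\Big).
\end{align*}
We then bound each term with probability $1-\delta/3$. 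For the first, $\|x_0\|_2=\mathcal{O}(\eta\sqrt{n\log(1/\delta)})\le\mathcal{O}(\eta\sqrt{n}\,T)$. For the second, $\sum_i\|u_i\|_2=\mathcal{O}(\sigma\sqrt{m}\,T)$ by Hoeffding for the independent sub-Gaussian norms $\|u_i\|_2$. The third was handled above. A union bound then gives $\sum_t\|x_t\|_2=\mathcal{O}(MT)$, which matches the definition of $M$. Multiplying by $\psi\rho^s\|C\|_2$ and treating $\psi$ as a constant completes the bound.

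The main obstacle is the concentration of $\sum_\tau\|w_\tau\|_2$, because the $w_\tau$ are adversarially and adaptively chosen rather than independent. We would handle this through the martingale-difference structure that Assumption~\ref{maxnorm} provides, conditioning on $\mathcal{F}_\tau$, and use a standard sub-Gaussian Azuma bound. One further point needs care: $w_\tau$ must be conditionally sub-Gaussian given $\mathcal{F}_\tau$ even at attack times, which Assumption~\ref{maxnorm} asserts for all $t$. The remaining steps are routine geometric-series bookkeeping.
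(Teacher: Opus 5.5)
Your proposal is correct and follows essentially the same route as the paper: the triangle inequality and the stability bound reduce everything to $\|x_0\|_2$, $\sum_i\|u_i\|_2$ and $\sum_\tau\|w_\tau\|_2$ (after exchanging sums, with weights $\rho^s/(1-\rho)$ and $1/(1-\rho)$), followed by sub-Gaussian concentration through the tower rule over the filtration and a union bound; concentrating the three pieces separately rather than the aggregate $X$ is only a cosmetic difference. As in the paper, your Azuma step implicitly treats $\mathcal{F}_t$ as $\sigma\{x_0,(u_\tau,w_\tau)_{\tau<t}\}$ so that $\|w_\tau\|_2$ is adapted, and the $\alpha$-sum runs over roughly $T+k$ indices, so the $\mathcal{O}(T)$ claim tacitly uses $k=\mathcal{O}(T)$.
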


We aim to demonstrate that estimation accuracy improves with larger values of $s$, while preserving the mild assumption on the attack probability $p<0.5$. To establish error bounds for a given $s$, we introduce a geometric subspace condition that strengthens Assumption~\ref{as:fullcolumn1}.
\begin{assumption}[Subspace separability]\label{as:fullcolumn}
  Suppose that 
     \begin{align*}
         \operatorname{Im}(CE)\cap
\operatorname{Im}([CAE\ \dots\ CA^{s-1}E])
=\{0\}.
     \end{align*} 
\end{assumption}
Assumption \ref{as:fullcolumn1} implies that $CE$ preserves all $e$ independent directions, while Assumption \ref{as:fullcolumn} implies that none of these directions can be represented as a linear combination of the directions generated by $CAE,\dots,CA^{s-1}E$.

Assumptions \ref{as:fullcolumn1} and \ref{as:fullcolumn} provide a necessary and sufficient condition useful for the analysis.
\begin{lemma}\label{assumptionequivalence}
There exists a matrix $H \in \mathbb{R}^{e \times r}$ such that
\[
H CE = I_e \quad \text{and} \quad H \cdot  [CAE ~ \dots ~ CA^{s-1}E] = 0
\]
if and only if
Assumptions \ref{as:fullcolumn1} and \ref{as:fullcolumn} concurrently hold.
\end{lemma}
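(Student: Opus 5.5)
We prove the two directions separately, writing $P := CE \in \mathbb{R}^{r\times e}$ and $Q := [CAE~\dots~CA^{s-1}E] \in \mathbb{R}^{r\times (s-1)e}$ throughout.

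\emph{Necessity.} Suppose $H$ satisfies $HP = I_e$ and $HQ = 0$.

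First, $e = \operatorname{rank}(I_e) = \operatorname{rank}(HP) \le \operatorname{rank}(P)$. Since $P$ has only $e$ columns, $\operatorname{rank}(P) = e$, which is Assumption~\ref{as:fullcolumn1}.

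Next, take $z \in \operatorname{Im}(P)\cap\operatorname{Im}(Q)$ and write $z = Pa = Qb$. Applying $H$ to both expressions gives $a = HPa = HQb = 0$. Hence $z = 0$, which is Assumption~\ref{as:fullcolumn}.

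\emph{Sufficiency.} This is the constructive direction and the only step needing care. Let $V := \operatorname{Im}(P)$ and $W := \operatorname{Im}(Q)$.

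By Assumption~\ref{as:fullcolumn}, the sum $V + W$ is direct. Choose a complement $Z$ so that $\mathbb{R}^r = V \oplus W \oplus Z$, for instance $Z = (V+W)^\perp$.

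Define a linear map $L:\mathbb{R}^r\to\mathbb{R}^e$ as follows:
\begin{itemize}
\item on $V$, set $L(Pa) := a$; this is well defined because $P$ is injective by Assumption~\ref{as:fullcolumn1};
\item on $W \oplus Z$, set $L := 0$.
\end{itemize}
Because the decomposition is direct, $L$ extends uniquely to all of $\mathbb{R}^r$ by linearity.

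Let $H$ be the matrix of $L$. Then $HPa = a$ for all $a$, so $HP = I_e$. Every column of $Q$ lies in $W$, so $HQ = 0$.

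An explicit matrix form is also available. Let $\Pi$ be the oblique projector onto $V$ along $W\oplus Z$, and let $P^\dagger = (P^TP)^{-1}P^T$, which exists by the rank condition. Then $H = P^\dagger \Pi$ works: $\Pi P = P$ gives $HP = P^\dagger P = I_e$, and $\Pi Q = 0$ gives $HQ = 0$.

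Equivalently, stack the two requirements as $H[P~Q] = [I_e~0]$. This is solvable if and only if the row space condition holds, i.e.\ $\ker[P~Q] \subseteq \ker[I_e~0]$. Indeed, if $Pa + Qb = 0$, then $Pa = -Qb \in V\cap W = \{0\}$, and injectivity of $P$ gives $a = 0$.

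The main point is that the two assumptions are exactly what is needed. Directness of $V+W$ lets $H$ vanish on $W$ without interfering with its action on $V$. Injectivity of $P$ makes "invert $P$ on $V$" well defined.
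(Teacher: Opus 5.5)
Your proof is correct and follows essentially the same route as the paper. Necessity is the identical rank-and-intersection argument, and for sufficiency the paper likewise defines the map $Pu + Qv \mapsto u$ on $\operatorname{Im}(P)+\operatorname{Im}(Q)$ (well defined by directness of the sum and injectivity of $P$) and extends it by zero on a complement; your explicit formula $H = P^\dagger \Pi$ and the kernel criterion $\ker[P~Q]\subseteq\ker[I_e~0]$ are valid restatements of that same construction.
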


Substituting the decomposition \eqref{relationship} into the constrained $\ell_2$-norm estimator \eqref{l2est} implies 
\begin{equation*}
\begin{split}
 \min_{\substack{H \in \mathbb{R}^{e \times r} \\ J \in \mathbb{R}^{e \times mk}}} &\sum_{t\in\mathcal{T}} \left\| (H G^* - J) \mathbf{U}_t^{(k)} +Hv_t^{(s)} + H\alpha_t^{(s)} + H\beta_t\right\|_2 
    \\ \text{s.t.} &\quad HCE =I_e.
\end{split}
\end{equation*}

Define $K:= HG^*-J$, $Q_s:=H\cdot[CE~ CAE ~ \dots ~ CA^{s-1}E]$, and $\mathbf{W}_t^{(s)}:=[w_{t-1}^T~ \dots ~ w_{t-s}^T]^T$, and rewrite the optimization problem as  
\begin{align*}
 \min_{\substack{K \in \mathbb{R}^{e \times mk}, Q_s \in \mathbb{R}^{e \times es}  \\ H \in \mathbb{R}^{e \times r}, J \in \mathbb{R}^{e \times mk} }} &\sum_{t\in\mathcal{T}} \left\| K \mathbf{U}_t^{(k)} +Q_s \mathbf{W}_t^{(s)} + H\alpha_t^{(s)} + H\beta_t\right\|_2 
    \\ \text{s.t.} \quad& HCE = I_e,~~HG^*-J = K, \numberthis\label{expanded2}\\\quad& Q_s = H\cdot [CE~ CAE ~ \dots ~ CA^{s-1}E].
\end{align*}

We assume that Assumptions~\ref{as:fullcolumn1}-\ref{as:fullcolumn} hold throughout the theoretical analysis that follows.

\section{Theoretical Analysis and Main Result}\label{sec:analysis}

In this section, we present the finite-sample analysis of the proposed constrained $\ell_2$-norm estimator. We first establish lower bounds on the control input energy and the interaction terms involving adversarial injections, leading to a universal lower bound for the underlying objective function. Using these preliminary bounds, we prove our main result on the estimation error of the Markov parameter matrix.

\subsection{Preliminary Bounds and Objective Decomposition}

To isolate the periods during which the input trajectory is free from recent historical attack interference, we define the time set
\begin{align*}
  \mathcal{T}_0 &= \{t \in \mathcal{T}: \xi_{t-i} = 0 \text{ for } i=2,\dots,s\}.
\end{align*}
We begin by establishing a high-probability lower bound on the uncorrupted input sequence aggregated over $\mathcal{T}_0$.

\begin{lemma}\label{K_lowerbound}
Let the control inputs $\{u_t\}_{t=0}^{T+k-2}$ be independent with $u_t \sim \mathcal{N}(0, \sigma^2 I_m)$. Consider a fixed matrix $K \in \mathbb{R}^{e \times mk}$. Then, given $\delta \in (0,1]$, when 
\begin{align*}
    T = \Omega\left( \frac{k}{(1-p)^{s-1}} \log\left(\frac{1}{\delta}\right) \right),
\end{align*}
it holds that 
\begin{align}\label{KU_T}
    \sum_{t\in\mathcal{T}_0} \|K \mathbf{U}_t^{(k)}\|_2 \ge \frac{(1-p)^{s-1}\sigma}{4} \sqrt{\frac{2}{\pi}} \|K\|_F T
\end{align}
with probability at least $1-\delta$.
\end{lemma}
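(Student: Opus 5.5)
We need to lower bound the sum over the random set $\mathcal{T}_0$ of $\|K\mathbf{U}_t^{(k)}\|_2$, for a fixed $K$. The key structural point is that the indicator $\mathbb{1}\{t\in\mathcal{T}_0\}$ depends only on the attack indicators $(\xi_{t-i})_{i=2}^s$. These indicators are independent of the designed Gaussian inputs, so we can condition on $\xi$ and treat the two sources of randomness separately. The plan has three steps: a per-term expectation, an expected count for $\mathcal{T}_0$, and concentration of both.

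\textbf{Step 1 (per-term expectation).} Since $\mathbf{U}_t^{(k)} \sim \mathcal{N}(0,\sigma^2 I_{mk})$, the vector $K\mathbf{U}_t^{(k)}$ is Gaussian with covariance $\sigma^2 KK^T$. For any unit vector $a$ in the row space direction we have $\|K\mathbf{U}\|_2 \ge |a^T K\mathbf{U}|$. A cleaner route is to use $\mathbb{E}\|K\mathbf{U}\|_2 \ge \sigma\sqrt{2/\pi}\,\|K\|_F/\sqrt{?}$. To avoid the dimension loss, I would instead use $\mathbb{E}\|g\|_2 \ge \sqrt{2/\pi}\,(\mathbb{E}\|g\|_2^2)^{1/2}$ for Gaussian vectors. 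This follows by writing $\|g\|_2 = \sup_{\|a\|=1} a^T g$ and comparing with the coordinate expansion $\|g\|_2 \ge \sum_i \lambda_i |z_i| / \|\lambda\|$ where $g=\sum_i \lambda_i z_i e_i$ in the SVD basis. This gives
\[
\mathbb{E}\|K\mathbf{U}_t^{(k)}\|_2 \ge \sigma\sqrt{2/\pi}\,\|K\|_F .
\]

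\textbf{Step 2 (size of $\mathcal{T}_0$).} Each $t$ lies in $\mathcal{T}_0$ with probability $(1-p)^{s-1}$. The indicators are $(s-1)$-dependent, so I split $\mathcal{T}$ into $s-1$ residue classes mod $s-1$; within a class the indicators are independent. A Chernoff bound on each class then gives $|\mathcal{T}_0| \ge \tfrac12(1-p)^{s-1}T$ with probability $1-\delta/2$, once $T=\Omega\bigl(s\log(1/\delta)/(1-p)^{s-1}\bigr)$. This is covered by the stated requirement since $s\le k$.

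\textbf{Step 3 (concentration of the Gaussian sum given $\mathcal{T}_0$).} The terms $Z_t=\|K\mathbf{U}_t^{(k)}\|_2$ are $\sigma\|K\|_2$-Lipschitz functions of Gaussians, but consecutive $\mathbf{U}_t^{(k)}$ overlap. I split $\mathcal{T}_0$ into $k$ residue classes mod $k$; within each class the stacked inputs are disjoint and hence independent. On each class I apply a lower-tail bound for sums of independent sub-Gaussian variables with mean at least $\sigma\sqrt{2/\pi}\|K\|_F$ and $\psi_2$-norm $O(\sigma\|K\|_2)\le O(\sigma\|K\|_F)$. Each class sum is then at least half its mean with probability $1-\delta/(2k)$, provided each class has $\Omega(\log(k/\delta))$ elements. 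Under the stated $T$ with factor $k$ this holds. Conditioning on $\xi$ is legitimate because $\mathcal{T}_0$ is independent of $u$.

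Combining the three steps gives
\[
\sum_{t\in\mathcal{T}_0} Z_t \ge \tfrac12\,|\mathcal{T}_0|\,\sigma\sqrt{2/\pi}\,\|K\|_F \ge \tfrac{(1-p)^{s-1}}{4}\,\sigma\sqrt{2/\pi}\,\|K\|_F\,T .
\]

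\textbf{Main obstacle.} The delicate step is Step 1, the dimension-free inequality $\mathbb{E}\|K\mathbf{U}\|_2 \ge \sqrt{2/\pi}\,\sigma\|K\|_F$. If a direct argument fails, I would fall back on the bound obtained from the largest singular direction plus Jensen-type comparisons, or on a weaker constant. The second delicate point is the simultaneous handling of the two dependency structures (window $s$ for $\xi$, window $k$ for $u$). Conditioning on $\xi$ first is what decouples them.
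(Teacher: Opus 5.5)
Your skeleton matches the paper: condition on $\xi$, lower-bound $|\mathcal{T}_0|$ by a Chernoff argument, concentrate the Gaussian sum, and use $\mathbb{E}\|K\mathbf{U}_t^{(k)}\|_2\ge\sigma\sqrt{2/\pi}\,\|K\|_F$. Where you differ is in how the overlap of the windows $\mathbf{U}_t^{(k)}$ is handled.

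The paper applies Gaussian concentration once to the whole map $\mathbf{U}\mapsto\sum_{t\in\mathcal{T}_0}\|K\mathbf{U}_t^{(k)}\|_2$. Its Lipschitz constant is $\|K\|_F\sqrt{|\mathcal{T}_0|k}$, since each $u_\tau$ enters at most $k$ windows. This gives failure probability $\exp(-\Omega(|\mathcal{T}_0|/k))$ directly, with no blocking and no condition on how $\mathcal{T}_0$ is spread in time.

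Your blocking modulo $k$ with Hoeffding on each class is more elementary, but as written it has two loose ends:
\begin{itemize}
\item It needs every class $\mathcal{T}_0\cap\{t: t\equiv j\pmod{k}\}$ to contain $\Omega(\log(k/\delta))$ elements. Your Step 2 does not provide this, because it only bounds the total $|\mathcal{T}_0|$.
\item The union bounds over the $k$ classes, and over the $s-1$ classes in Step 2, turn $\log(1/\delta)$ into $\log(k/\delta)$. Strictly, you establish the lemma only under the stronger condition $T=\Omega\bigl(\frac{k}{(1-p)^{s-1}}\log(k/\delta)\bigr)$.
\end{itemize}
Both are easy to repair. The first goes away if you run the Step 2 Chernoff bound on residue classes modulo $k$ instead of $s-1$; the indicators stay independent within a class because $k>s-1$. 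Better, you can replace the union bounds by H\"older's inequality on the moment generating function across classes. That yields $\exp(-\Omega(|\mathcal{T}_0|/k))$ using only the total $|\mathcal{T}_0|$, and it also removes the extra logarithms.

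In return, your Step 2 is more careful than the paper. The paper invokes a plain Chernoff bound for $|\mathcal{T}_0|$, although the events $\{t\in\mathcal{T}_0\}$ and $\{t'\in\mathcal{T}_0\}$ share attack indicators whenever $|t-t'|<s-1$. You also try to justify the expectation bound, which the paper simply asserts.

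That justification needs one correction. The inequality $\|g\|_2\ge\sum_i\lambda_i|z_i|/\|\lambda\|$ is false, since its two sides scale differently in $\lambda$. Take instead $a_i=\mathrm{sign}(z_i)\,\lambda_i/\|\lambda\|_2$. Then $\|g\|_2\ge a^Tg=\sum_i\lambda_i^2|z_i|/\|\lambda\|_2$, whose expectation is $\sqrt{2/\pi}\,\|\lambda\|_2=\sqrt{2/\pi}\,\|K\|_F$. Multiplying by $\sigma$ gives Step 1, so this step is not the obstacle you feared.
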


\begin{remark}[Generalization of Control inputs]
Our formulation accommodates a broader class of control input distributions beyond standard Gaussian signals. Following \citet{kim2026sharp}, the requirements on the input vectors can be relaxed to satisfy only sub-Gaussianity along with an expected persistence of excitation condition. Consequently, the control inputs need not be strictly Gaussian, making the proposed defense framework applicable to a wider range of practical excitation signals.
\end{remark}

Next, we establish a lower bound on the interaction term
\begin{align}\label{qsstar}
\sum_{t\in\mathcal{T}}\| K \mathbf{U}_t^{(k)} +Q_s \mathbf{W}_t^{(s)} \|_2 - \|Q_s^* \mathbf{W}_t^{(s)}\|_2,
\end{align} where $Q_s^* :=[I_e ~0~\dots ~0]\in \mathbb{R}^{e\times es}$. 
For the purpose of analysis, select the minimum-spectral-norm matrix $H^*$ satisfying $H^* \cdot [CE~ CAE ~ \dots ~ CA^{s-1}E] = Q_s^*$, whose existence is guaranteed by Lemma~\ref{assumptionequivalence}. The spectral norm of $H^*$ satisfies the following geometric bound.

\begin{lemma}[Minimum-Norm Bound]\label{minnormbound}
Suppose that Assumptions \ref{as:fullcolumn1} and \ref{as:fullcolumn} hold. 
Then, the  minimum spectral-norm solution satisfying $$H^*\cdot [CE~ CAE ~ \dots ~ CA^{s-1}E] = [I_e ~0~\dots ~0]$$ 
satisfies the geometric bound
\[
\|H^*\|_2 \le \frac{1}{\sigma_{\min}(CE) \sin(\theta)},
\]
where $\sigma_{\min}(CE) > 0$ is the smallest singular value of $CE$, and $\theta \in (0, \pi/2]$ is the minimal principal angle between  $\operatorname{Im}(CE)$ and  $\operatorname{Im}([CAE ~ \dots ~ CA^{s-1}E])$.
\end{lemma}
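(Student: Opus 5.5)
Since $H^*$ is the minimum-spectral-norm solution, it suffices to exhibit one feasible $H$ with $\|H\|_2 \le 1/(\sigma_{\min}(CE)\sin\theta)$. Let $\mathcal{U} = \operatorname{Im}(CE)$ and $\mathcal{V} = \operatorname{Im}([CAE~\dots~CA^{s-1}E])$. Let $P_{\mathcal{V}^\perp} = I_r - P_{\mathcal{V}}$ be the orthogonal projector onto $\mathcal{V}^\perp$. Any $H$ with $H P_{\mathcal{V}} = 0$, i.e. $H = H P_{\mathcal{V}^\perp}$, automatically annihilates $[CAE~\dots~CA^{s-1}E]$. The remaining requirement is therefore $H P_{\mathcal{V}^\perp} CE = I_e$.

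The natural candidate is
\[
H := (P_{\mathcal{V}^\perp} CE)^{\dagger},
\]
the Moore--Penrose pseudoinverse. Its row space is $\operatorname{Im}(P_{\mathcal{V}^\perp}CE) \subseteq \mathcal{V}^\perp$, so $H P_{\mathcal{V}} = 0$. If $P_{\mathcal{V}^\perp}CE$ has full column rank $e$, then $H P_{\mathcal{V}^\perp} CE = I_e$, hence $HCE = I_e$ and $H$ is feasible. Moreover $\|H\|_2 = 1/\sigma_{\min}(P_{\mathcal{V}^\perp}CE)$.

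The key step is the lower bound $\sigma_{\min}(P_{\mathcal{V}^\perp}CE) \ge \sigma_{\min}(CE)\sin\theta$, which also yields full column rank because $\sin\theta>0$.

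1. Take any unit $z\in\mathbb{R}^e$ and set $u = CEz \in \mathcal{U}$. Then $\|u\|_2 \ge \sigma_{\min}(CE) > 0$ by Assumption~\ref{as:fullcolumn1}.
2. Decompose $\|P_{\mathcal{V}^\perp}u\|_2^2 = \|u\|_2^2 - \|P_{\mathcal{V}}u\|_2^2$.
3. By the definition of the minimal principal angle, $\cos\theta = \max\{|\langle a,b\rangle| : a\in\mathcal{U},\, b\in\mathcal{V},\, \|a\|=\|b\|=1\}$. Hence $\|P_{\mathcal{V}}u\|_2 \le \cos\theta\,\|u\|_2$ for every $u\in\mathcal{U}$.
4. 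Combining, $\|P_{\mathcal{V}^\perp}u\|_2 \ge \sin\theta\,\|u\|_2 \ge \sin\theta\,\sigma_{\min}(CE)$.

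Assumption~\ref{as:fullcolumn} guarantees $\mathcal{U}\cap\mathcal{V}=\{0\}$, so $\theta>0$ and the bound is nontrivial. If $\mathcal{V}=\{0\}$ (e.g. $s=1$), then $\theta=\pi/2$ by convention and the bound reduces to $\|(CE)^\dagger\|_2 = 1/\sigma_{\min}(CE)$.

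The main obstacle is step 3, specifically justifying $\|P_{\mathcal{V}}u\|_2 \le \cos\theta\,\|u\|_2$ from the principal-angle definition. This follows because $\|P_{\mathcal{V}}u\|_2 = \max_{b\in\mathcal{V},\|b\|=1}\langle u,b\rangle$. The remaining steps are routine facts about pseudoinverses.

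Finally, $\|H^*\|_2 \le \|H\|_2$ by minimality, which gives the stated bound.
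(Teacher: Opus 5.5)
Your proposal is correct and follows essentially the same route as the paper: you restrict to $H = HP_{\mathcal{V}^\perp}$, take the pseudoinverse of $P_{\mathcal{V}^\perp}CE$, and lower-bound $\sigma_{\min}(P_{\mathcal{V}^\perp}CE)$ by $\sigma_{\min}(CE)\sin\theta$ through the principal-angle definition. The only difference is that you use $(P_{\mathcal{V}^\perp}CE)^{\dagger}$ merely as a feasible point and then invoke minimality of $H^*$, whereas the paper identifies the pseudoinverse as the (claimed unique) minimum spectral-norm solution; your version avoids having to justify that optimality claim, and the uniqueness part is not true in general.
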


Noting that $ HCE w_{t-1} = w_{t-1}$ and $Q_s^* \mathbf{W}_t^{(s)} = w_{t-1}$, the objective expression \eqref{qsstar} can be decomposed based on the attack indicator $\xi_{t-1}$ as
\begin{subequations}\label{KQfirstsecond}
\begin{align}
  &  \sum_{\substack{t\in\mathcal{T} \\\xi_{t-1}=0}}\| K \mathbf{U}_t^{(k)} +\tilde Q_s \mathbf{\tilde W}_t^{(s)}\|_2 \label{KQfirst}\\&  +  \sum_{\substack{t\in\mathcal{T}\\\xi_{t-1}=1}}\| K \mathbf{U}_t^{(k)} +w_{t-1}+ \tilde Q_s \mathbf{\tilde W}_t^{(s)} \|_2 - \|w_{t-1}\|_2,\label{KQsecond}
  \end{align}
\end{subequations}
where $\tilde Q_s := [HCAE~\dots ~ HCA^{s-1}E]$ and $\mathbf{\tilde W}_t^{(s)}:= [w_{t-2}^T~ \dots ~ w_{t-s}^T]^T$. Applying the reverse triangle inequality to each term in \eqref{KQsecond} yields a unified lower bound on \eqref{KQfirstsecond} as
\begin{align*}
  f_T(K,\tilde Q_s):=  &\sum_{{t=k-1}}^{T+k-2}\|K \mathbf{U}_t^{(k)}  + \tilde Q_s\mathbf{\tilde W}_t^{(s)}\|_2\\&\hspace{5mm}-2 \min\Bigl\{\|K \mathbf{U}_t^{(k)}  + \tilde Q_s\mathbf{\tilde W}_t^{(s)}\|_2,\|w_{t-1}\|_2\Bigr\}.
\end{align*}
Let $\Psi_T(K) := \inf_{\tilde Q_s} f_T(K, \tilde Q_s)$.


\begin{lemma}\label{lem:psi_lower_bound}
Let the control inputs $\{u_t\}_{t=0}^{T+k-2}$ be independent with $u_t \sim \mathcal{N}(0, \sigma^2 I_m)$. For $K \in \mathbb{R}^{e \times mk}$ and $\tilde Q_s \in \mathbb{R}^{e\times e(s-1)}$, consider a function 
\begin{equation}\label{f_T_K_Q}
\begin{split}
    f_T(K,\tilde Q_s):=  &\sum_{t\in\mathcal{T}}\|K \mathbf{U}_t^{(k)}  + \tilde Q_s\mathbf{\tilde W}_t^{(s)}\|_2\\&\hspace{0mm}-2 \min\Bigl\{\|K \mathbf{U}_t^{(k)}  + \tilde Q_s\mathbf{\tilde W}_t^{(s)}\|_2,\|w_{t-1}\|_2\Bigr\}
    \end{split}
\end{equation}
where $\mathbf{\tilde W}_t^{(s)} := [w_{t-2}^T ~ \dots ~ w_{t-s}^T]^T$. 
Given $h>0$ and  $\delta \in (0, 1]$, when
\begin{equation}\label{eq:T_condition_psi}
\begin{split}
    T = \Omega\biggr(& \frac{k}{(1-p)^{s-1}} \log\left(\frac{1}{\delta}\right) \\&+ \frac{\eta^2 e \log(1/\delta)}{ h^2 \rho^{2s}} \log\left(\frac{1}{\delta} \cdot\frac{\eta^2 e \log(1/\delta)}{  \rho^{2s}  h^2} \right) \biggr),
    \end{split}
\end{equation}
the infimum $\Psi_T(K) := \inf_{\tilde Q_s} f_T(K, \tilde Q_s)$ satisfies
\begin{align}\label{psi_lowerbound}
    \Psi_T(K) \ge \frac{(1-2p)(1-p)^{s-1}\sigma}{4} \sqrt{\frac{2}{\pi}} \|K\|_F T -  \rho^s h T.
\end{align}
 with probability at least $1-\delta$.
\end{lemma}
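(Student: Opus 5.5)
We will work with the per-term quantity $g_t := \|a_t\|_2 - 2\min\{\|a_t\|_2,\|w_{t-1}\|_2\}$, where $a_t := K\mathbf{U}_t^{(k)} + \tilde Q_s\mathbf{\tilde W}_t^{(s)}$, so that $f_T(K,\tilde Q_s)=\sum_{t\in\mathcal{T}} g_t$. The key observation is that $g_t = \|a_t\|_2$ whenever $\xi_{t-1}=0$ (then $w_{t-1}=0$), while $g_t \ge -\|a_t\|_2$ always. Hence
\begin{align*}
f_T(K,\tilde Q_s) \ge \sum_{t\in\mathcal{T}} (1-2\xi_{t-1})\|a_t\|_2 .
\end{align*}
The plan is to restrict attention to $t\in\mathcal{T}_0$, where $\mathbf{\tilde W}_t^{(s)}=0$, so that $a_t=K\mathbf{U}_t^{(k)}$ there. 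For $t\notin\mathcal{T}_0$ the contribution $\tilde Q_s\mathbf{\tilde W}_t^{(s)}$ can be nonzero and may be chosen adversarially through $\tilde Q_s$. The infimum over $\tilde Q_s$ is the real difficulty, so we will need a bound uniform in $\tilde Q_s$.

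The first step splits the sum into $t\in\mathcal{T}_0$ and $t\notin\mathcal{T}_0$. On $\mathcal{T}_0$ the summand is $(1-2\xi_{t-1})\|K\mathbf{U}_t^{(k)}\|_2$. Because $\xi_{t-1}$ is independent of the inputs and of the event $t\in\mathcal{T}_0$ (which involves only $\xi_{t-2},\dots,\xi_{t-s}$), the conditional mean of this term is $(1-2p)\,\mathbb{E}\|K\mathbf{U}_t^{(k)}\|_2 \ge (1-2p)\sigma\sqrt{2/\pi}\,\|K\|_F$ (up to the Gaussian norm constant used in Lemma~\ref{K_lowerbound}). Repeating the concentration argument of Lemma~\ref{K_lowerbound} with the extra Bernoulli sign weights, which are bounded and independent, will give
\begin{align*}
\sum_{t\in\mathcal{T}_0}(1-2\xi_{t-1})\|K\mathbf{U}_t^{(k)}\|_2 \ge \frac{(1-2p)(1-p)^{s-1}\sigma}{4}\sqrt{\tfrac{2}{\pi}}\|K\|_FT
\end{align*}
under the first term of \eqref{eq:T_condition_psi}. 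Since $|\mathcal{T}_0|\approx(1-p)^{s-1}T$, the $k$-fold overlap of $\mathbf{U}_t^{(k)}$ is handled by splitting into $k$ interleaved subsequences, exactly as in Lemma~\ref{K_lowerbound}.

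The second step, which is the main obstacle, treats $t\notin\mathcal{T}_0$, where the summand $(1-2\xi_{t-1})\|a_t\|_2$ may be negative. For each such $t$, let $a_t^0 := K\mathbf{U}_t^{(k)}$ denote the attack-free part. We would group these terms and argue that, since $\xi_{t-1}$ is independent of $a_t$ (both $\mathbf{U}_t^{(k)}$ and $w_{t-2},\dots,w_{t-s}$ are $\mathcal{F}_{t-1}$-measurable, while $\xi_{t-1}$ is a fresh Bernoulli), each term has nonnegative conditional mean $(1-2p)\|a_t\|_2$. This makes the sum a supermartingale-type process with nonnegative drift, whose negative deviation must be controlled uniformly over $\tilde Q_s$.

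For the uniform control we would use a truncation. The magnitude of $\mathbf{\tilde W}_t^{(s)}$ is sub-Gaussian with scale $\eta\sqrt{e}$, and the threshold $\rho^s h$ enters by truncating $\|w\|$ at level $\eta\sqrt{e\log(1/\delta)}$. We would then apply a covering/union argument over directions of $\tilde Q_s$, using the homogeneity of $\|a_t\|$ in $(K,\tilde Q_s)$. Freedman/Azuma inequalities applied to the martingale differences $(1-2\xi_{t-1}-(1-2p))\|a_t\|_2$ give a deviation of order $\sqrt{T\log(1/\delta)}\cdot\eta\sqrt{e}$ times the scale. Absorbing this into $\rho^s hT$ requires $T\gtrsim \eta^2e\log(1/\delta)/(h^2\rho^{2s})$, with the extra logarithmic factor coming from the union bound and truncation. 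This reproduces the second term of \eqref{eq:T_condition_psi}.

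Finally, we drop the nonnegative drift on the complement of $\mathcal{T}_0$, add the two bounds, and take the infimum over $\tilde Q_s$, which yields \eqref{psi_lowerbound}. If the uniformity over the unbounded $\tilde Q_s$ fails, the fallback is to note that large $\|\tilde Q_s\|$ only increases the drift term $(1-2p)\|a_t\|$. This reduces the problem to a bounded set of $\tilde Q_s$, on which the net argument applies.
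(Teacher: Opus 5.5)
Your proposal has a genuine gap, and it comes from your first inequality. Writing $f_T(K,\tilde Q_s)\ge\sum_{t\in\mathcal{T}}(1-2\xi_{t-1})\|a_t\|_2$ throws away the cap $\min\{\|a_t\|_2,\|w_{t-1}\|_2\}\le\|w_{t-1}\|_2$. After that, every fluctuation you must control scales with $\|a_t\|_2$, i.e.\ with $\|K\|_F$ and $\|\tilde Q_s\|_2$. The slack $\rho^s hT$ in \eqref{psi_lowerbound} depends on neither.

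Concretely, Step~1 does not hold under \eqref{eq:T_condition_psi}. The $\pm1$ weights leave the Lipschitz constant $\|K\|_F\sqrt{|\mathcal{T}_0|k}$ of the Gaussian concentration unchanged but shrink the mean by the factor $1-2p$. The signs alone also make $\sum_{t\in\mathcal{T}_0}(1-2\xi_{t-1})$ deviate from $(1-2p)|\mathcal{T}_0|$ by order $\sqrt{|\mathcal{T}_0|}$. Reaching a constant fraction of the mean therefore needs $T\gtrsim k\log(1/\delta)/\big((1-2p)^2(1-p)^{s-1}\big)$. Once $1-2p\ll 1/\sqrt{k\log(1/\delta)}$ and $T$ is only of the order of the first term, the weighted sum is negative with probability bounded away from zero. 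That extra $(1-2p)^{-2}$ is exactly what the lemma avoids, and it matters most in the $p\to1/2$ regime the paper targets. Your Step~1 bound does not draw on $\rho^s hT$, and it could not: its fluctuation grows with $\|K\|_F$ for arbitrary fixed $K$.

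Step~2 has the same defect in a worse form. Azuma applied to the increments $2(p-\xi_{t-1})\|a_t\|_2$ gives a deviation of order $\sqrt{T\log(1/\delta)}$ times the typical size of $\|a_t\|_2$. That size is unbounded in $K$ and $\tilde Q_s$, so it cannot be absorbed into $\rho^s hT$. Truncating $\tilde{\mathbf W}_t$ does not help, because $\tilde Q_s$ multiplies it. Your fallback of using the drift for large $\tilde Q_s$ contradicts dropping that drift in your last step, and it would bring back the $(1-2p)^{-2}$ factor anyway.

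The missing idea is to keep the cap and truncate at the scale of the current attack. On $\mathcal{E}_B:=\{\max_{t\in\mathcal{T}}\|w_{t-1}\|_2\le B\}$ one has $\min\{\|a_t\|_2,\|w_{t-1}\|_2\}\le\xi_{t-1}r_t$ with $r_t:=\min\{\|a_t\|_2,B\}$. Since $r_t\le\|a_t\|_2$, this gives
\[
f_T(K,\tilde Q_s)\ \ge\ (1-2p)\sum_{t\in\mathcal{T}}\|a_t\|_2-2\sum_{t\in\mathcal{T}}(\xi_{t-1}-p)\,r_t .
\]
The terms $2(\xi_{t-1}-p)r_t$ are martingale differences with respect to a filtration generated by all inputs and by $w_0,\dots,w_{t-2}$. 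Their conditional range is at most $2B$ whatever $K$ and $\tilde Q_s$ are, so Azuma--Hoeffding bounds the whole Bernoulli fluctuation by $B\sqrt{2T\log(3/\delta)}$. A union bound over the sub-Gaussian attacks gives $B^2\asymp\eta^2e\log(T/\delta)$. Requiring the fluctuation to be at most $\rho^s hT$ is then exactly the second term of \eqref{eq:T_condition_psi}.

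The drift is lower-bounded by discarding $t\notin\mathcal{T}_0$ and using $a_t=K\mathbf{U}_t^{(k)}$ on $\mathcal{T}_0$. Lemma~\ref{K_lowerbound} then applies verbatim, with no sign weights, and gives the first term. So the sign fluctuations are paid for by the slack $\rho^s hT$, not by the $(1-2p)$-weighted drift.

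Your concern about passing to the infimum over $\tilde Q_s$ is legitimate. The paper's proof takes the infimum directly, even though its Azuma event depends on $\tilde Q_s$ through $r_t$. After truncation, though, the fluctuation depends on $\tilde Q_s$ only through $[0,B]$-valued quantities. That is a much easier object for a uniform argument than the unbounded, scale-dependent increments your route produces.
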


The above lemma holds for a fixed $K\in\mathbb{R}^{e\times mk}$ with high probability.
Now, we extend this result to construct a universal lower bound on $\Psi_T(K)$ over all points in the Frobenius sphere with high probability. 
\begin{lemma}\label{lem:universallowerbound}
 Let the control inputs $\{u_t\}_{t=0}^{T+k-2}$ be independent with $u_t \sim \mathcal{N}(0, \sigma^2 I_m)$.  Define $c_* := \frac{(1-2p)(1-p)^{s-1}\sigma}{4} \sqrt{\frac{2}{\pi}}$ and 
 $R:= \frac{4\rho^s h}{c_*}$ for $h>0$. Let $\mathcal{S}_R := \{K \in \mathbb{R}^{e \times mk} : \|K\|_F = R\}$ denote the Frobenius sphere of radius $R$. Given $\delta\in(0,1]$, when 
 \begin{align}\label{final_timebound}
    &T=\tilde \Omega\left(\frac{e  m k^2s }{(1-p)^{s-1}}  +\frac{\eta^2 e \cdot  (emks)^2 }{ \rho^{2s}h^2 } \right),
\end{align}
we have
 \begin{align}\label{goal}
    \Psi_T(K) \ge \frac{c_*}{4} RT,\quad \forall K\in \mathcal{S}_R 
    \end{align}
    with probability at least $1-\delta$.
\end{lemma}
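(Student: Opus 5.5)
Our plan is a standard $\epsilon$-net argument on the sphere $\mathcal{S}_R$, combining the pointwise bound of Lemma~\ref{lem:psi_lower_bound} with a union bound and a Lipschitz estimate for $\Psi_T$ in $K$. Take an $\epsilon$-net $\mathcal{N}_\epsilon$ of $\mathcal{S}_R$ in Frobenius norm. Since $\mathcal{S}_R$ lies in a space of dimension $emk$, we can choose $|\mathcal{N}_\epsilon|\le (3R/\epsilon)^{emk}$. Apply Lemma~\ref{lem:psi_lower_bound} at each $K'\in\mathcal{N}_\epsilon$ with confidence $\delta' = \delta/(2|\mathcal{N}_\epsilon|)$. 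For $\|K'\|_F=R$, the choice $R = 4\rho^s h/c_*$ gives $\rho^s h = c_*R/4$, so \eqref{psi_lowerbound} reads
\[
\Psi_T(K')\ge c_* R T - \tfrac{c_*}{4}RT = \tfrac{3c_*}{4}RT
\]
for every net point simultaneously. The price is $\log(1/\delta') = \log(2/\delta) + emk\log(3R/\epsilon)$. Substituting this into \eqref{eq:T_condition_psi} accounts for the factor $emk$ (up to logs) multiplying both terms of the time requirement, and is one source of the extra factors in \eqref{final_timebound}.

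Next we need a Lipschitz bound. Write $g_t(K,\tilde Q_s) := \|K\mathbf{U}_t^{(k)}+\tilde Q_s\mathbf{\tilde W}_t^{(s)}\|_2$. The summand of $f_T$ is $\phi(g_t)$ with $\phi(a)=a-2\min\{a,\|w_{t-1}\|_2\}$, which is $1$-Lipschitz in $a$. Also, $g_t$ is $\|\mathbf{U}_t^{(k)}\|_2$-Lipschitz in $K$, uniformly in $\tilde Q_s$. Hence for every $\tilde Q_s$,
\[
|f_T(K,\tilde Q_s)-f_T(K',\tilde Q_s)|\le \|K-K'\|_F\sum_{t\in\mathcal{T}}\|\mathbf{U}_t^{(k)}\|_2 .
\]
Taking infima, the same bound holds for $|\Psi_T(K)-\Psi_T(K')|$. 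The sum $\sum_t\|\mathbf{U}_t^{(k)}\|_2$ involves Gaussian norms: each is $\le \sigma\sqrt{mk}$ in expectation and concentrates. We bound it by $\mathcal{O}(\sigma\sqrt{mk}\,T)$ with probability $1-\delta/2$ once $T=\Omega(\log(1/\delta))$. We do this by noting that $\sum_t\|\mathbf{U}_t^{(k)}\|_2\le \sqrt{k}\sum_{t}\|u_t\|_2$ up to reindexing, and applying Bernstein's inequality to the i.i.d. sub-Gaussian norms $\|u_t\|_2$.

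We then choose $\epsilon = c_*R/(2C\sigma\sqrt{mk})$, so that the Lipschitz error is at most $\tfrac{c_*}{2}RT$. For any $K\in\mathcal{S}_R$, picking the nearest $K'\in\mathcal{N}_\epsilon$ gives
\[
\Psi_T(K)\ge \tfrac{3c_*}{4}RT-\tfrac{c_*}{2}RT=\tfrac{c_*}{4}RT,
\]
which is \eqref{goal}. The ratio $R/\epsilon$ is then $\mathcal{O}(\sigma\sqrt{mk}/c_*)$. This contributes only logarithmic factors (depending on $p$, $s$, $m$, $k$) to $\log|\mathcal{N}_\epsilon|$, which are hidden in $\tilde\Omega$.

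The main obstacle is the bookkeeping of the $T$-condition. We substitute $\log(1/\delta')\approx emk\log(\cdot)+\log(1/\delta)$ into \eqref{eq:T_condition_psi}, including the nested logarithm in the second term. We then must check that the resulting requirement is dominated by \eqref{final_timebound}; the stated powers $k^2s$ and $(emks)^2$ leave slack that absorbs the log terms (for instance $\log(3R/\epsilon)\lesssim s\log(1/(1-p))+\log(mk)$). A second subtle point is that the infimum over $\tilde Q_s$ in the Lipschitz step must not depend on $K$. This holds because the Lipschitz bound is uniform in $\tilde Q_s$, so no compactness argument for the infimum is needed.
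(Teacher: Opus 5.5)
Your proposal is correct and follows the paper's proof essentially step for step. The shared steps are:
\begin{itemize}
\item an $\epsilon$-net of $\mathcal{S}_R$;
\item Lemma~\ref{lem:psi_lower_bound} at each net point with confidence $\delta/(2N_\epsilon)$, which gives $\tfrac{3c_*}{4}RT$ because $\rho^s h=\tfrac{c_*}{4}R$;
\item a Lipschitz bound on $\Psi_T$ that is uniform in $\tilde Q_s$ (your observation that $a\mapsto a-2\min\{a,b\}$ is $1$-Lipschitz is the paper's identity $a-2\min\{a,b\}=|a-b|-b$);
\item a high-probability $\mathcal{O}(\sigma\sqrt{mk}\,T)$ bound on $\sum_{t\in\mathcal{T}}\|\mathbf{U}_t^{(k)}\|_2$;
\item $\epsilon$ chosen so that the discretization error is at most $\tfrac{c_*}{2}RT$.
\end{itemize}

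One slip needs fixing. The deterministic inequality $\sum_t\|\mathbf{U}_t^{(k)}\|_2\le\sqrt{k}\sum_\tau\|u_\tau\|_2$ is false: a single nonzero $u_\tau$ appears in $k$ windows, so the left side equals $k\|u_\tau\|_2$. Any of the following repairs works:
\begin{itemize}
\item use the factor $k$ from the triangle inequality;
\item use Cauchy--Schwarz, $\sum_t\|\mathbf{U}_t^{(k)}\|_2\le\sqrt{Tk}\,\|\mathbf{U}\|_2$, and concentrate $\|\mathbf{U}\|_2$;
\item use the paper's Gaussian Lipschitz concentration of the whole sum.
\end{itemize}
Each of these changes $R/\epsilon$ only polynomially, hence $\log N_\epsilon$ only by logarithmic factors, so your conclusion stands.

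One bookkeeping point: $\log(R/\epsilon)$ contains $(s-1)\log\frac{1}{1-p}$, which is linear rather than logarithmic in $s$. The union-bound price is therefore $\tilde{\mathcal{O}}(emks)$, not $emk$ times log factors. This is exactly where the explicit $s$ in \eqref{final_timebound} comes from, as your last paragraph implicitly acknowledges.
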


\subsection{Main Finite-Sample Estimation Guarantee}

Note that $\Psi_T(K)$ lower-bounds the original objective term \eqref{qsstar}. Let $(\hat H, \hat J)$ be a minimizer of the constrained $\ell_2$-norm estimator \eqref{l2est}, and define the estimation error matrix $\hat K := \hat H G^* - \hat J$. We bound $\|\hat K\|_F$ by establishing matching upper and lower bounds on $\Psi_T(\hat K)$.
We provide our main theorem below.

\begin{theorem}\label{thm:main}
Let the control inputs $\{u_t\}_{t=0}^{T+k-2}$ be independent with $u_t \sim \mathcal{N}(0, \sigma^2 I_m)$. Consider a minimizer $(\hat H, \hat J)$ to the constrained $\ell_2$-norm estimator \eqref{l2est}, and define $\hat K := \hat H G^* - \hat J$. Given $\delta \in (0, 1]$, when
\begin{align}\label{final_timebound2}
    T = \tilde{\Omega}\bigg( \frac{e m k^2 s}{(1-p)^{s-1}} + \frac{\eta^2 e\cdot (e m k s)^2}{\rho^{2s} (\|\hat H\|_2+\|H^*\|_2)^2} \bigg),
\end{align}
then  the estimation error satisfies
\begin{align}
\|\hat K\|_F = \mathcal{O}\bigg(\Big(\frac{\rho}{1-p}\Big)^s\frac{(1-p)(\|\hat H\|_2 + \|H^*\|_2)(1+\|C\|_2 M)}{(1-2p) \sigma} \bigg)  
\end{align}
with probability at least $1-\delta$, where $M$ is defined in Lemma \ref{alphabetabound}.
\end{theorem}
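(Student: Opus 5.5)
The argument compares the objective value of the minimizer $(\hat H,\hat J)$ with that of the feasible comparator $(H^*, H^*G^*)$. It then combines a pointwise upper bound on $\Psi_T(\hat K)$ with the universal lower bound of Lemma~\ref{lem:universallowerbound}, using convexity to pass from the sphere to the ball.

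\textbf{Step 1 (upper bound from optimality).} The pair $(H^*,J^*)$ with $J^*:=H^*G^*$ is feasible for \eqref{l2est}, since $H^*CE=I_e$. It gives $K=0$ and $Q_s=Q_s^*$, so its objective equals
\[
\sum_{t\in\mathcal{T}}\bigl\|w_{t-1}+H^*\alpha_t^{(s)}+H^*\beta_t\bigr\|_2 .
\]
Optimality of $(\hat H,\hat J)$ and the triangle inequality then give
\[
\sum_{t\in\mathcal{T}}\|\hat K\mathbf U_t^{(k)}+\hat Q_s\mathbf W_t^{(s)}\|_2-\|Q_s^*\mathbf W_t^{(s)}\|_2\le(\|\hat H\|_2+\|H^*\|_2)\sum_{t}\bigl(\|\alpha_t^{(s)}\|_2+\|\beta_t\|_2\bigr).
\]
By Lemma~\ref{alphabetabound}, the right-hand side is at most $\mathcal O\bigl((\|\hat H\|_2+\|H^*\|_2)\rho^s\|C\|_2MT\bigr)$. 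The left-hand side is exactly the interaction term \eqref{qsstar} evaluated at $(\hat K,\hat Q_s)$, where $\hat Q_s$ has first block $I_e$. Through the decomposition \eqref{KQfirstsecond} and the reverse triangle inequality, it is bounded below by $f_T(\hat K,\tilde{\hat Q}_s)\ge\Psi_T(\hat K)$. Hence
\[
\Psi_T(\hat K)\le \mathcal O\bigl(\rho^s(\|\hat H\|_2+\|H^*\|_2)\|C\|_2MT\bigr).
\]

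\textbf{Step 2 (choice of $h$ and lower bound).} Set $h:=c\,(\|\hat H\|_2+\|H^*\|_2)(1+\|C\|_2M)$ for a large absolute constant $c$. With this choice, the sample requirement \eqref{final_timebound} becomes \eqref{final_timebound2}. Lemma~\ref{lem:universallowerbound} then gives $\Psi_T(K)\ge \frac{c_*}{4}RT$ on $\mathcal S_R$, where $R=4\rho^sh/c_*$. To extend this to $\hat K$ outside the ball, I would use the fact that $\Psi_T$ is "star-shaped increasing" along rays. Writing $f_T=\sum_t g(a_t)$ with $a_t=\|K\mathbf U_t+\tilde Q\tilde{\mathbf W}_t\|_2$ and $g(a)=a-2\min\{a,b\}=|a-b|-b$, the map $\lambda\mapsto f_T(\lambda K,\lambda\tilde Q)$ is convex for $\lambda\ge 0$ and satisfies $f_T(0,0)=-\sum_t\|w_{t-1}\|$. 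That normalization is awkward, so I would instead compare at $\lambda=1$ and $\lambda=\lambda_0=R/\|\hat K\|_F$ with a careful convexity argument. Alternatively, I would argue directly on the original objective $F(H,J)$, which is convex. Define the segment from $(H^*,J^*)$ to $(\hat H,\hat J)$ and let $\bar K$ be the point on it with $\|\bar K\|_F=R$ (if $\|\hat K\|_F>R$). By convexity, $F$ at that point is at most the comparator value plus a controlled amount. The lower bound $\frac{c_*}{4}RT$ minus the perturbation term (again at most $\rho^s h T/c$) then contradicts $F(\bar H,\bar J)\le F(H^*,J^*)$, because the $\alpha,\beta$ perturbation at the intermediate point is bounded by $(\|\hat H\|+\|H^*\|)$ times the Lemma~\ref{alphabetabound} bound. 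Hence $\|\hat K\|_F\le R$.

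\textbf{Step 3 (conclude).} We obtain
\[
\|\hat K\|_F\le R=\frac{4\rho^s h}{c_*}=\mathcal O\Bigl(\frac{\rho^s(\|\hat H\|_2+\|H^*\|_2)(1+\|C\|_2M)}{(1-2p)(1-p)^{s-1}\sigma}\Bigr),
\]
which matches the claim after rewriting $(1-p)^{-(s-1)}=(1-p)(1-p)^{-s}$. A union bound over the events of Lemmas~\ref{alphabetabound} and~\ref{lem:universallowerbound}, each with $\delta/2$, finishes the proof. The main obstacle is Step 2's localization, i.e. transferring the sphere bound to $\hat K$ via convexity while keeping the dependence of $h$ on $\hat H$ consistent. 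Since $\|\hat H\|_2$ is data-dependent, I would treat it as in the theorem statement, i.e. condition on it appearing in the sample size.
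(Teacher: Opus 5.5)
Your proof is correct. Step 1, the optimality comparison against the feasible comparator $(H^*,H^*G^*)$ that bounds $\Psi_T(\hat K)$ from above, is the same as the paper's Case 2 computation. Your localization step, however, takes a different route.

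The paper never uses convexity of the estimator. It observes that $\min\{\alpha a,b\}\le\alpha\min\{a,b\}$ for $\alpha\ge1$, so $f_T(\alpha K,\alpha\tilde Q_s)\ge\alpha f_T(K,\tilde Q_s)$ and hence $\Psi_T(\alpha K)\ge\alpha\Psi_T(K)$. This upgrades the sphere bound of Lemma~\ref{lem:universallowerbound} to the linear-growth bound $\Psi_T(\bar K)\ge\frac{c_*}{4}\|\bar K\|_F T$ for every $\|\bar K\|_F\ge R$. The paper then evaluates this bound directly at $\hat K$, with $h=\|\hat H\|_2+\|H^*\|_2$ and two cases ($\|\hat K\|_F<R$ or not); the maximum of the two cases produces the factor $1+\|C\|_2M$.

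You instead use convexity of $F(H,J)=\sum_t\|Hy_t-J\mathbf U_t^{(k)}\|_2$ and of the affine feasible set. The point $(H_\lambda,J_\lambda)$ on the segment toward $(\hat H,\hat J)$ has the following properties:
\begin{itemize}
\item it is feasible;
\item $K_\lambda=\lambda\hat K$;
\item $Q_{s,\lambda}$ still has first block $I_e$;
\item $\|H_\lambda\|_2\le\max\{\|\hat H\|_2,\|H^*\|_2\}$;
\item $F(H_\lambda,J_\lambda)\le F(H^*,J^*)$, with no extra slack. The $\alpha,\beta$ slack only enters when you pass from $F$ to $\Psi_T$, which your ``plus a controlled amount'' slightly misplaces.
\end{itemize}
Rerunning Step 1 at that point and folding $1+\|C\|_2M$ into $h$ gives the contradiction and the single conclusion $\|\hat K\|_F\le R$.

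Your route is the standard M-estimation localization. It needs the lower bound only on the sphere, at the cost of repeating the comparison at an intermediate point. The paper's route exploits a structural property of $f_T$ itself and yields a bound on the whole exterior of the ball.

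Your abandoned first idea would also have worked. Since $a-2\min\{a,b\}$ vanishes at $a=0$, you have $f_T(0,0)=0$, not $-\sum_t\|w_{t-1}\|_2$. Convexity of $\lambda\mapsto f_T(\lambda K,\lambda\tilde Q_s)$ then gives exactly the paper's inequality $f_T(\lambda K,\lambda\tilde Q_s)\ge\lambda f_T(K,\tilde Q_s)$ for $\lambda\ge1$.

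Finally, the data dependence of $h$ through $\|\hat H\|_2$, which you flag, is equally present in the paper's proof and is not addressed there either.
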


\begin{proof}
For any scalar $\alpha \ge 1$ and $a, b \ge 0$, we have $\min\{\alpha a, b\} \le \alpha \min\{a, b\}$. Applying this inequality to $f_T$ yields $f_T(\alpha K, \alpha Q) \ge \alpha f_T(K, Q)$ for all $\alpha \ge 1$. Taking the infimum over $Q$ on both sides provides
\begin{equation}\label{alphalower}
\begin{split}
    \Psi_T (\alpha K ) &= \inf_{\tilde Q_s} f_T(\alpha K , \tilde Q_s)= \inf_{ Q} f_T(\alpha K , \alpha  Q)\\& \ge \alpha \inf_Qf_T(K,Q) = \alpha \Psi_T(K)
    \end{split}
\end{equation}
for all $\alpha\ge 1$.
Setting $h := \|\hat H\|_2 +\|H^*\|_2> 0$ and $R := \frac{4\rho^s (\|\hat H\|_2+\|H^*\|_2)}{c_*}$, Lemma \ref{lem:universallowerbound} guarantees that when $T$ satisfies \eqref{final_timebound2},
the universal lower bound \eqref{goal} holds
with probability at least $1-\delta$.

Now, for any matrix $\bar K$ satisfying $\|\bar K\|_F \ge R$, consider the scaling parameter $\alpha := \|\bar K\|_F / R \ge 1$. Since $\|\bar K / \alpha\|_F = R$, sequentially applying \eqref{alphalower} and \eqref{goal} yields
\begin{align}\label{psitkbar}
    \Psi_T(\bar K) = \Psi_T\Big( \alpha \frac{\bar K}{\alpha} \Big) \ge \alpha \Psi_T\Big( \frac{\bar K}{\alpha} \Big) \ge \alpha \frac{c_*}{4}  RT = \frac{c_*}{4}  \|\bar K\|_F T.
\end{align}
with probability at least $1-\delta$.

Now, we evaluate the norm of $\hat K = \hat H G^*-\hat J$ under two exhaustive cases:
\begin{itemize}[leftmargin=0.4cm]
    \item \textbf{Case 1 ($\|\hat K\|_F < R$):} By definition of $R$, we directly have
    \begin{align}\label{case1}
    \|\hat K\|_F < \frac{4\rho^s  (\|\hat H\|_2+\|H^*\|_2)}{c_*} = \mathcal{O}\left(\frac{\rho^s(\|\hat H\|_2+\|H^*\|_2)}{(1-2p)(1-p)^{s-1} \sigma}\right).
\end{align}
    \item \textbf{Case 2 ($\|\hat K\|_F \ge R$):} By the equivalence between \eqref{l2est} and \eqref{expanded2}, there exist matrices $\hat Q_s,\hat H, \hat J$ such that $(\hat K, \hat Q_s, \hat H, \hat J)$ is a minimizer of \eqref{expanded2}. Furthermore, under Assumptions \ref{as:fullcolumn1} and \ref{as:fullcolumn}, there exist matrices $H^*, J^*$ such that $(0, Q_s^*,  H^*,  J^*)$ is feasible for \eqref{expanded2} by Lemma \ref{assumptionequivalence}. Applying the triangle inequality, we obtain
    \begin{align*}
      & \sum_{t\in\mathcal{T}}  \left\| \hat K \mathbf{U}_t^{(k)} +\hat Q_s \mathbf{W}_t^{(s)}\right\|_2- \|\hat H\|_2 \bigg(\sum_{t\in\mathcal{T}}\|\alpha_t^{(s)}\|_2 + \|\beta_t\|_2\bigg)\\
        &\le\sum_{t\in\mathcal{T}}  \left\| \hat K \mathbf{U}_t^{(k)} +\hat Q_s \mathbf{W}_t^{(s)} + \hat H\alpha_t^{(s)} + \hat H\beta_t\right\|_2 \\&\le \sum_{t\in\mathcal{T}} \left\|  Q_s^* \mathbf{W}_t^{(s)} +  H^*\alpha_t^{(s)} +  H^*\beta_t\right\|_2 \\&\le \sum_{t\in\mathcal{T}}\| Q_s^* \mathbf{W}_t^{(s)}\|_2 + \| H^*\|_2 \bigg(\sum_{t\in\mathcal{T}}\|\alpha_t^{(s)}\|_2 + \|\beta_t\|_2\bigg),
    \end{align*}
    which implies that \begin{align*}\Psi_T(\hat K)&\le \sum_{t\in\mathcal{T}}  \| \hat K \mathbf{U}_t^{(k)} +\hat Q_s \mathbf{W}_t^{(s)}\|_2-\| Q_s^* \mathbf{W}_t^{(s)}\|_2\\&\le (\|\hat H\|_2 + \|H^*\|_2)\bigg(\sum_{t\in\mathcal{T}}\|\alpha_t^{(s)}\|_2 + \|\beta_t\|_2\bigg).\end{align*} Applying Lemma \ref{alphabetabound} to the right-hand side, while using the lower bound \eqref{psitkbar} evaluated at $\bar K = \hat K$ on the left-hand side, we arrive at
    \begin{align*}
        \frac{c_*}{4} \|\hat K\|_F T \le \Psi_T(\hat K) = \mathcal{O}\big( (\|\hat H\|_2 + \|H^*\|_2) \cdot \rho^s \|C\|_2 M T \big),
    \end{align*}
    which simplifies to
    \begin{align}\label{case2}
    \|\hat K\|_F  = \mathcal{O}\left(\frac{\rho^s (\|\hat H\|_2 + \|H^*\|_2)}{(1-2p)(1-p)^{s-1} \sigma}\cdot \|C\|_2 M\right)
\end{align}
with  probability at least $1-\delta$. 
\end{itemize}
Combining \eqref{case1} and \eqref{case2}
completes the proof.
\end{proof}

\subsection{Relative Error Analysis and Implementation Remarks}

Theorem~\ref{thm:main} shows that the estimation error $\|\hat K\|_F$ is proportional to $\|\hat H\|_2 + \|H^*\|_2$ and decays exponentially with parameter $s$ whenever $\frac{\rho}{1-p} < 1$, or equivalently $p < 1 - \rho$. Combining this with Assumption~\ref{null} ($p < 0.5$), the general convergence condition becomes $p < \min\{0.5, \ 1-\rho\}$.

To eliminate the absolute scale of the matrix $\hat H$, we consider the relative estimation error $\frac{\|\hat K\|_F}{\|\hat H\|_2}$. By feasibility of $\hat H$, the identity $\hat H CE = I_e$ implies $\|\hat H\|_2 \ge \frac{1}{\sigma_{\min}(CE)}$. Recalling Lemma~\ref{minnormbound}, the ratio between $\|H^*\|_2$ and $\|\hat H\|_2$ satisfies
$\frac{\|H^*\|_2}{\|\hat H\|_2} \le 
\frac{1}{\sin(\theta)},$
where $\theta$ denotes the minimal principal angle between $\operatorname{Im}(CE)$ and $\operatorname{Im}([CAE ~ \dots ~ CA^{s-1}E])$. Consequently, Theorem~\ref{thm:main} yields the relative estimation error bound
\begin{align}\label{eq:relative_error_bound}
   \frac{\|\hat K\|_F}{\|\hat H\|_2} = \mathcal{O}\bigg(\Big(\frac{\rho}{1-p}\Big)^s\frac{(1-p)(1+ \frac{1}{\sin(\theta)})(1+\|C\|_2 M)}{(1-2p) \sigma} \bigg).
\end{align}

\begin{remark}[Attack Detection]
Equation \eqref{eq:relative_error_bound} provides a principled metric for identifying attacked time steps. Given a solution $(\hat H, \hat J)$, evaluating the filter residual $\hat H y_t - \hat J \mathbf{U}_t^{(k)}$ recovers the residual term $\hat K \mathbf{U}_t^{(k)} + \hat H v_t^{(s)} + \hat H \alpha_t^{(s)} + \hat H \beta_t$. Since the normalized terms $\frac{\hat K \mathbf{U}_t^{(k)}}{\|\hat H\|_2}$, $\alpha_t^{(s)}$, and $\beta_t$ are bounded and small, unattacked periods yield small filter residuals. In contrast, during attacked periods, large magnitude of adversarial attacks $v_t^{(s)}$ induce large residual spikes under clear separation between normal and corrupted data, enabling attack detection.
\end{remark}

\begin{remark}[Preventing Degeneracy]\label{rem:filter_scaling}
If $\|\hat H\|_2$ or $\|H^*\|_2$ is exceptionally small, the time bound $T$ in \eqref{final_timebound2} increases significantly. This issue can be readily mitigated by enforcing a lower-bound constraint $\|H\|_2 \ge 1$ in the optimization problem \eqref{l2est}, which leaves the relative estimation error invariant. 
The only exception occurs when $CE$ is square and invertible ($s=1$), where $H = (CE)^{-1}$ is unique and imposing $\|H\|_2 \ge 1$ could induce infeasibility if $\|(CE)^{-1}\|_2 < 1$. In other cases where $H$ is non-unique, the set of valid left-inverses is unbounded, guaranteeing the existence of a feasible $H$ satisfying $\|H\|_2 \ge 1$. Imposing this lower-bound constraint prevents $\|\hat H\|_2 + \|H^*\|_2$ from approaching zero, effectively treating the $\|\hat H\|_2 + \|H^*\|_2$ factor in \eqref{final_timebound2} as a constant bounded below by $2$ to ensure a stable time bound.
\end{remark}

\section{Conclusion}
In this paper, we study the problem of robust system identification for partially observed linear dynamical systems subject to adversarial attacks with an attack probability of $p < 0.5$. By designing a defense filter satisfying full column rank and subspace separability conditions, we introduce a constrained $\ell_2$-norm optimization framework and establish rigorous finite-sample error bounds for recovering the Markov parameter matrix $G^*$. This overcomes a fundamental limitation of existing approaches that rely on diminishing attack probability as the target estimation error decreases. Furthermore, our theoretical analysis demonstrates that normalized filter residuals provide a reliable diagnostic signal for separating normal and corrupted data timestamps. Future work includes extending this framework to a two-stage algorithm that filters out attacked data, followed by re-estimating the system dynamics on the clean dataset to provide robust estimation guarantees.

\bibliography{ifacconf}             

\appendix
\section{Proofs in Section \ref{sec:probform}}
\subsection{Proof of Lemma \ref{alphabetabound}} 

\begin{proof}
Note that $\|\|x_0\|_2 \|_{\psi_2} \leq \eta{\sqrt{n}}, \|\|w_t\|_2\|_{\psi_2} \leq \eta{\sqrt{e}}, $ and $\|\|u_t\|_2\|_{\psi_2} \leq \sigma\sqrt{m}$  follow from Assumption \ref{maxnorm} and the Gaussianity of $u_t$. 
In turn, using the system dynamics \eqref{sysd} yields
\begin{align*}
  &  \|\|x_t\|_2\|_{\psi_2} = \Bigr\|\Bigr\|A^t x_0 + \sum_{i=0}^{t-1} (A^{t-1-i}Bu_i+A^{t-1-i}E w_{i})\Bigr\|_2 \Bigr\|_{\psi_2} \\& \leq \|A^t\|_2\cdot\|\|x_0\|_2\|_{\psi_2} \\&\hspace{5mm}+ \sum_{i=0}^{t-1} \|A^{t-1-i}\|_2 \bigr(\|B\|_2\cdot \|\|u_i\|_2\|_{\psi_2} +\|E\|_2 \|\|w_i\|_2\|_{\psi_2}\bigr) \\&\le \frac{\psi}{1-\rho} \cdot \left[\eta {\sqrt{n}}+ \eta\sqrt{e}\|E\|_2 + \sigma \sqrt{m} \|B\|_2\right]=\mathcal{O}(M)
    \numberthis\label{xtsums}
\end{align*}
due to the triangle inequality and geometric sum with Assumption \ref{stability}.  We can sum up $\|x_t\|_2$ over $t=0, \dots, T-1$ to yield
\begin{align*}
        &\sum_{t=0}^{T-1} \|x_t\|_2 
        \le\sum_{i=0}^{\infty} \|A^i\|_2   \Bigr[\|x_0\|_{2} + \sum_{t=0}^{T-2} (\|Ew_t\|_{2} +  \|Bu_t\|_{2})\Bigr]\\&\hspace{2mm}\leq\frac{\psi}{1-\rho}\Bigr[\|x_0\|_{2} + \sum_{t=0}^{T-2} (\|E\|_2\|w_t\|_{2} +  \|B\|_2\|u_t\|_{2})\Bigr]:= X.
    \end{align*}
{Since the filtration $\mathcal{F}_t$ can alternatively be written as $\bm{\sigma}\{x_0, \{(u_\tau, w_\tau)\}_{\tau=0}^{t-1}\}$, sequentially applying the tower rule using $\mathcal{F}_t$ over $T$ steps establishes that $X-\mathbb{E}[X]$ is sub-Gaussian with norm $\mathcal{O}(M\sqrt{T})$.} 
   Then, Hoeffding's inequality yields
    \begin{align*}
        &\mathbb{P}\bigr( X-\mathbb{E}[X] \leq MT  \bigr) \geq 1-\exp(-\Omega(T)).\numberthis\label{xt123}
    \end{align*}
Then, considering $\sum_{t=0}^{T-1} \|x_t\|_2 \le X$ and the expectation bound 
    $\mathbb{E}\bigr[\sum_{t=0}^{T-1} \|x_t\|_2\bigr]=\mathcal{O}(MT)$ from \eqref{xtsums} yields
    \begin{align*}
        \sum_{t=k-1}^{T+k-2} \|\beta_t\|_2= \sum_{t=0}^{T-1}\|CA^{k-1}x_t\|_2 = \mathcal{O}( \rho^{k-1}  \|C\|_2MT )\numberthis\label{betatsum}
    \end{align*}
with probability at least $1-\frac{\delta}{2}$. Similarly, following  the same filtration argument, we have 
\begin{align*}
    \sum_{t=k-1}^{T+k-2}\|\alpha_t^{(s)}\|_2& \le\mathcal{O}\Big( \rho^s\|C\|_2\frac{\sum_{t=0}^{T+k-4}\|E\|_2\|w_t\|_2}{1-\rho}\Big) \\& = \mathcal{O}\Big( \rho^s \|C\|_2\frac{\eta\sqrt{n}\|E\|_2}{1-\rho}T\Big)\numberthis\label{alphatsum}
\end{align*}
with probability at least $1-\frac{\delta}{2}$. We complete the proof by applying the union bound and summing up \eqref{betatsum} and \eqref{alphatsum}.
\end{proof}

\subsection{Proof of Lemma \ref{assumptionequivalence}}
\begin{proof}
For  simplicity, let $L := [CAE ~ \dots ~ CA^{s-1}E]$. 

$(\Longrightarrow)$ Suppose such a matrix $H \in \mathbb{R}^{e \times r}$ exists. 
First, $H CE = I_e$ implies $e = \operatorname{rank}(I_e) = \operatorname{rank}(H CE) \le \operatorname{rank}(CE) \le e$, so $\operatorname{rank}(CE) = e$.
Second, let $y \in \operatorname{Im}(CE) \cap \operatorname{Im}(L )$. Then $y = CE u = L  v$ for some $u \in \mathbb{R}^e$ and $v \in \mathbb{R}^{e(s-1)}$. Multiplying by $H$ yields:
\[
u = I_e u = H CE u = H y = H L  v = 0 \cdot v = 0.
\]
Thus $y =  0$, proving $\operatorname{Im}(CE) \cap \operatorname{Im}(L ) = \{0\}$.

\medskip
$(\Longleftarrow)$ Suppose $\operatorname{rank}(CE) = e$ and $\operatorname{Im}(CE) \cap \operatorname{Im}(L) = \{0\}$. 
Let $V = \{x+y\in\mathbb{R}^r: x\in\operatorname{Im}(CE), y\in \operatorname{Im}(L) \}$. Any vector $z \in V$ can be expressed as $z = CE u + L v$. If 
\[
CE u_1 + L v_1 = CE u_2 + L v_2,
\]
then $CE(u_1 - u_2) = L (v_2 - v_1) \in \operatorname{Im}(CE) \cap \operatorname{Im}(L) = \{0\}$. Hence $CE(u_1 - u_2) = 0$, and since $\operatorname{rank}(CE) = e$, it follows that $u_1 = u_2$.

Therefore, the linear map $H_0 : V \to \mathbb{R}^e$ given by
\[
H_0(CE u + L v) = u
\]
is well-defined because $u$ is uniquely determined for every $z \in V$. Evaluation at $v = 0$ and $u = 0$ respectively gives $H_0 CE = I_e$ and $H_0 L = 0$.

Finally, extend $H_0$ to all of $\mathbb{R}^r$ by choosing a complementary subspace $W$ such that $\mathbb{R}^r = V + W$ with $V \cap W = \{0\}$, and setting $H(w) = 0$ for all $w \in W$. This yields the desired matrix $H\in\mathbb{R}^{e\times r}$.
\end{proof}

\section{Proofs in Section \ref{sec:analysis}}

\subsection{Proof of Lemma \ref{K_lowerbound}}
\begin{proof}
For a fixed realization of $\mathcal{T}_0$, define $f_K(\mathbf{U}) := \sum_{t\in\mathcal{T}_0} \|K \mathbf{U}_t^{(k)}\|_2$ for $\mathbf{U} = [u_0^T ~ u_1^T ~\dots u_{T+k-2}^T]^T \in \mathbb{R}^{m(T+k-1)}$.

First, we compute the Lipschitz constant of $f_K$. For any $\mathbf{U}, \mathbf{U}' \in \mathbb{R}^{m(T+k-1)}$, Cauchy-Schwarz inequality yields
\begin{align*}
    |f_{K}(\mathbf{U}) - f_{K}(\mathbf{U}')| 
    &\le \sum_{t\in\mathcal{T}_0} \|K (\mathbf{U}_t^{(k)} - (\mathbf{U}_t')^{(k)})\|_2 \\
    &\le \|K\|_F \sum_{t\in\mathcal{T}_0} \|\mathbf{U}_t^{(k)} - (\mathbf{U}_t')^{(k)}\|_2 \\
    &\le \|K\|_F \sqrt{|\mathcal{T}_0|} \sqrt{\sum_{t\in\mathcal{T}_0} \|\mathbf{U}_t^{(k)} - (\mathbf{U}_t')^{(k)}\|_2^2} \\
    &\le \|K\|_F \sqrt{|\mathcal{T}_0| k} \cdot \|\mathbf{U} - \mathbf{U}'\|_2.\numberthis\label{KUlipschitz}
\end{align*}
Thus, $f_K$ has a Lipschitz constant of  $\|K\|_F \sqrt{|\mathcal{T}_0| k}$.

Gaussian concentration for Lipschitz functions (see Lemma 1 in \cite{kim2025bridging}) implies that
the sub-Gaussian norm of $\sum_{t\in\mathcal{T}_0} \|K\mathbf{U}_t^{(k)}\|_2-\mathbb{E}[\|K\mathbf{U}_t^{(k)}\|_2]$ is bounded by $\mathcal{O}(\sigma\|K\|_F\sqrt{|\mathcal{T}_0|k})$. Thus, we can write
\begin{align*}
   & \mathbb{P}\biggr( \sum_{t\in\mathcal{T}_0} \|K \mathbf{U}_t^{(k)}\|_2-\mathbb{E}[\|K \mathbf{U}_t^{(k)}\|_2] < -\frac{\sigma }{2}\sqrt{\frac{2}{\pi}}\|K\|_F|\mathcal{T}_0|\biggr)\\&\hspace{55mm}\le \exp\left(-\Omega\biggr(\frac{|\mathcal{T}_0| }{k}\right)\biggr).
\end{align*}
By $\sum_{t\in\mathcal{T}_0}\mathbb{E}[\|K \mathbf{U}_t^{(k)}\|_2]\ge \sigma\sqrt{\frac{2}{\pi}}\|K\|_F |\mathcal{T}_0|$, we arrive at
\begin{align*}
   & \mathbb{P}\biggr( \sum_{t\in\mathcal{T}_0} \|K \mathbf{U}_t^{(k)}\|_2<  \frac{\sigma }{2}\sqrt{\frac{2}{\pi}}\|K\|_F|\mathcal{T}_0|\biggr)\le \exp\left(-\Omega\biggr(\frac{|\mathcal{T}_0| }{k}\right)\biggr).
\end{align*}

We now bound the success probability using a union bound over two failure events:
\begin{enumerate}
    \item $E_1$: The event that $|\mathcal{T}_0| \ge \frac{(1-p)^{s-1}}{2} T$. By Chernoff's bound, $\mathbb{P}(E_1^c) \le \exp\left( -\frac{1}{8}(1-p)^{s-1} T \right)$. Setting this bound to $\le \delta/2$ requires $T = \Omega\big(\frac{1}{(1-p)^{s-1}} \log\left(\frac{1}{\delta}\right)\big)$.
    \item $E_2$: The event that $\sum_{t\in\mathcal{T}_0} \|K \mathbf{U}_t^{(k)}\|_2 \ge \frac{\sigma}{2}\sqrt{\frac{2}{\pi}}\|K\|_F |\mathcal{T}_0|$. Conditioned on $E_1$, we have $|\mathcal{T}_0| \ge \frac{(1-p)^{s-1}}{2} T$, leading to 
   $ \mathbb{P}(E_2^c \mid E_1) \le \exp\bigr( -\Omega\bigr(\frac{(1-p)^{s-1} T}{ k} \bigr)\bigr).$
    Setting this bound to $\le \delta/2$ requires $T =\Omega\big(\frac{ k}{(1-p)^{s-1}} \log\left(\frac{1}{\delta}\right)\big)$.
\end{enumerate}

Since $k\ge 1$, by taking $T =\Omega\big(\frac{ k}{(1-p)^{s-1}} \log\left(\frac{1}{\delta}\right)\big)$, the union bound guarantees that $E_1 \cap E_2$ holds with probability at least $1 - \mathbb{P}(E_1^c) - \mathbb{P}(E_2^c \mid E_1) \ge 1 - \delta$.

Finally, on the event $E_1 \cap E_2$, we substitute the lower bound for $|\mathcal{T}_0|$ in $E_1$ directly into the lower bound for $\sum_{t\in\mathcal{T}_0} \|K \mathbf{U}_t^{(k)}\|_2$ in $E_2$.
This completes the proof.
\end{proof}

\subsection{Proof of Lemma \ref{minnormbound}}
\begin{proof}
For simplicity, let $L = \begin{bmatrix} CAE & \dots & CA^{s-1}E \end{bmatrix}$. Also, let $P_L^\perp = I_r - L L^+$, where $L^+$ is the Moore-Penrose pseudo-inverse of $L$. The constraint $H^* L = 0$ parameterizes $H^* = X P_L^\perp$, which reduces $H^* CE = I_e$ to $X (P_L^\perp CE) = I_e$. Under Assumptions~\ref{as:fullcolumn1} and~\ref{as:fullcolumn}, $P_L^\perp CE$ has full column rank $e$, yielding the unique minimum spectral-norm solution $H^* = (P_L^\perp CE)^+$ with $\|H^*\|_2 = \frac{1}{\sigma_{\min}(P_L^\perp CE)}$.

For any unit vector $x \in \mathbb{R}^e$, let $y = CE x \in \operatorname{Im}(CE)$. By the definitions of singular values and the minimal principal angle $\theta$:
\[
\|P_L^\perp CE x\|_2 = \|P_L^\perp y\|_2 \ge \sin(\theta) \|y\|_2 \ge \sin(\theta) \sigma_{\min}(CE).
\]
Taking the infimum over $\|x\|_2 = 1$ yields $\sigma_{\min}(P_L^\perp CE) \ge \sigma_{\min}(CE) \sin(\theta) > 0$. Taking the reciprocal completes the proof.
\end{proof}

\subsection{Proof of Lemma \ref{lem:psi_lower_bound}}

\begin{proof}
Define the $\sigma$-algebra $\mathcal{U} := \sigma(\{u_t\}_{t=0}^{T+k-2})$ and the growing filtration
\begin{align*}
    \mathcal{G}_{t-1} := \sigma\left( \mathcal{U}, \, \{w_{t-2}, w_{t-3}, \dots, w_0\} \right).
\end{align*}
Note that $\mathbf{U}_t^{(k)} \in \mathcal{G}_{t-1}$ and $\mathbf{\tilde W}_t^{(s)} \in \mathcal{G}_{t-1}$, which implies that $\|K \mathbf{U}_t^{(k)} + \tilde Q_s \mathbf{\tilde W}_t^{(s)}\|_2$ is $\mathcal{G}_{t-1}$-measurable. Moreover, the indicator of new attack satisfies $\mathbb{P}(\xi_{t-1}=1 \mid \mathcal{G}_{t-1}) = p$.

Fix a threshold $B > 0$ and introduce the bounded, $\mathcal{G}_{t-1}$-measurable process $r_t := \min\big\{ \|K \mathbf{U}_t^{(k)} + \tilde Q_s \mathbf{\tilde W}_t^{(s)}\|_2, B \big\}$. Define the high-probability noise-truncation event
\begin{align*}
    \mathcal{E}_B := \left\{ \max_{t\in\mathcal{T}} \|w_{t-1}\|_2 \le B \right\}.
\end{align*}
On event $\mathcal{E}_B$, we have $\min\{\|K \mathbf{U}_t^{(k)} + \tilde Q_s \mathbf{\tilde W}_t^{(s)}\|_2, \|w_{t-1}\|_2\} \le \xi_{t-1} r_t$ since $\xi_{t-1}=0$ implies $w_{t-1}=0$. Substituting this into \eqref{f_T_K_Q} and centering around $p$ yields:
\begin{align*}
  &  f_T(K,\tilde Q_s) \ge
\sum_{t\in\mathcal{T}} \|K \mathbf{U}_t^{(k)}  + \tilde Q_s\mathbf{\tilde W}_t^{(s)}\|_2-2\sum_{t\in\mathcal{T}}\xi_{t-1}r_t\\
    &\ge (1-2p) \sum_{t\in\mathcal{T}} \|K \mathbf{U}_t^{(k)} + \tilde Q_s\mathbf{\tilde W}_t^{(s)}\|_2 - 2\sum_{t\in\mathcal{T}}(\xi_{t-1}-p) r_t.\numberthis\label{1-2plower}
\end{align*}
Let $X_t := 2(\xi_{t-1}-p)r_t$. Then, since $\mathbb{E}[X_t \mid \mathcal{G}_{t-1}] = 0$, $\{X_t\}$ forms a martingale difference sequence with respect to $\{\mathcal{G}_{t-1}\}$.
Since $X_t \in [-2pr_t, \, 2(1-p)r_t]$ conditioned on $\mathcal{G}_{t-1}$, its conditional range is bounded by $2r_t \le 2B$. By Hoeffding's lemma for bounded variables, $\mathbb{E}\left[ e^{\lambda X_t} \mid \mathcal{G}_{t-1} \right] \le \exp\big( \frac{\lambda^2 B^2}{2} \big)$. Sequentially applying the tower rule using $\mathcal{G}_{t-1}$ over $T$ steps yields $\mathbb{E}\left[ \exp\left(\lambda \sum_{t\in\mathcal{T}} X_t\right) \right] \le \exp\big( \frac{\lambda^2 T B^2}{2} \big)$. By Hoeffding's inequality, we have
\begin{align}\label{eq:X_concentration}
    \mathbb{P}\left( \sum_{t\in\mathcal{T}} X_t \ge B \sqrt{2T \log\Big(\frac{3}{\delta}\Big)} \right) \le \frac{\delta}{3}.
\end{align}
Furthermore,
since $\|w_t\|_2$ is sub-Gaussian with parameter $\eta\sqrt{e}$, there exists a constant $c>0$ such that $\mathbb{P}(\|w_t\|_2 > B) \le 2\exp\big( -\frac{c B^2}{ \eta^2 e} \big)$. A union bound over all $T$ steps gives $\mathbb{P}(\mathcal{E}_B^c) \le 2T \exp\left( -\frac{c B^2}{ \eta^2 e} \right)$. Choosing 
\begin{align}\label{eq:B_choice}
    B =\sqrt{\frac{\eta^2 e}{c}  \log\left(\frac{6T}{\delta}\right) }
\end{align}
ensures that $\mathbb{P}(\mathcal{E}_B) \ge 1 - \frac{\delta}{3}$.

Finally, invoking Lemma \ref{K_lowerbound}, to guarantee that inequality \eqref{KU_T} holds with probability at least $1-\frac{\delta}{3}$, it suffices that $T = \Omega\left( \frac{k}{(1-p)^{s-1}} \log\left(\frac{3}{\delta}\right) \right)$.

Substituting \eqref{KU_T} and \eqref{eq:X_concentration} into \eqref{1-2plower}, we obtain
\begin{equation}\label{ft_lowerbound}
\begin{split}
  &  f_T(K, \tilde Q_s) 
    \\
    &\ge \frac{(1-2p)(1-p)^{s-1}\sigma}{4} \sqrt{\frac{2}{\pi}} \|K\|_F T - B \sqrt{2T \log\Big(\frac{3}{\delta}\Big)}.
    \end{split}
\end{equation}

We bound the last term as $B \sqrt{2T \log(\frac{3}{\delta})}\le  \rho^s h T$, which translates to $T \ge \frac{2 B^2 \log(3/\delta)}{  \rho^{2s} h^2}$. Substituting \eqref{eq:B_choice} for $B$ yields the sufficient condition on $T$ in \eqref{eq:T_condition_psi}. By applying a union bound across all events, the lower bound \eqref{ft_lowerbound} holds under \eqref{eq:T_condition_psi} with probability at least $1-\delta$.

Taking the infimum over $\tilde Q_s$ on both sides of \eqref{ft_lowerbound} yields \eqref{psi_lowerbound}, concluding the proof.
\end{proof}

\subsection{Proof of Lemma \ref{lem:universallowerbound}}
 
\begin{proof}
We establish the uniform lower bound via an $\epsilon$-net discretization argument over $\mathcal{S}_R$.
To this end, recall that $f_T(K, \tilde Q_s)$ can be equivalently represented using the identity $a - 2\min\{a, b\} = |a - b| - b$:
\begin{align*}
    &f_T(K, \tilde Q_s) \\&= \sum_{t\in\mathcal{T}} \Big| \|K \mathbf{U}_t^{(k)} + \tilde Q_s \mathbf{\tilde W}_t^{(s)}\|_2 - \|w_{t-1}\|_2 \Big| - \|w_{t-1}\|_2.
\end{align*}
For any $K, \tilde K \in \mathcal{S}_R$ and any matrix $\tilde Q_s$, applying the reverse triangle inequality yields
\begin{align*}
    &|f_T(K, \tilde Q_s) - f_T(\tilde K, \tilde Q_s)| 
    \\&\le \sum_{t\in\mathcal{T}} \left| \|K \mathbf{U}_t^{(k)} + \tilde Q_s \mathbf{\tilde W}_t^{(s)}\|_2 - \|\tilde K \mathbf{U}_t^{(k)} + \tilde Q_s \mathbf{\tilde W}_t^{(s)}\|_2 \right| \\
    &\le \sum_{t\in\mathcal{T}} \|(K - \tilde K) \mathbf{U}_t^{(k)}\|_2 \le \|K - \tilde K\|_F \sum_{t\in\mathcal{T}} \|\mathbf{U}_t^{(k)}\|_2.
\end{align*}
Rearranging and taking the infimum over $\tilde Q_s$ on both sides gives
\begin{align}\label{eq:psi_lipschitz}
    |\Psi_T(K) - \Psi_T(\tilde K)| \le \|K - \tilde K\|_F \sum_{t\in\mathcal{T}} \|\mathbf{U}_t^{(k)}\|_2.
\end{align}

Similar to the relationship \eqref{KUlipschitz}, the term $\sum_{t\in\mathcal{T}} \|\mathbf{U}_t^{(k)}\|_2$ is $\sqrt{Tk}$-Lipschitz. 
Since we have $\mathbb{E}[\|\mathbf{U}_t^{(k)}\|_2] \le \sigma \sqrt{mk}$ from $\mathbf{U}_t^{(k)} \sim \mathcal{N}(0, \sigma^2 I_{mk})$, 
Gaussian concentration for Lipschitz functions again implies that 
\begin{align*}
    \mathbb{P}\left( \sum_{t\in\mathcal{T}} \|\mathbf{U}_t^{(k)}\|_2 \le 2  \sigma \sqrt{mk}\cdot T \right) \ge 1 -  \exp(-\Omega(mT)).
\end{align*}
Thus, when $T=\Omega\bigr( \frac{1}{m}\log\left(\frac{2}{\delta}\right)\bigr)$, 
the inequality \eqref{eq:psi_lipschitz} extends to 
\begin{align}\label{Kinterbound}
    |\Psi_T(K) - \Psi_T(\tilde K)| \le 2 \sigma \sqrt{mk} \cdot T  \|K - \tilde K\|_F
\end{align}
with probability at least $1-\frac{\delta}{2}$.
To bound the right-hand side by $\frac{c_*}{2} R T$, we set the discretization grid resolution to
\begin{align*}
    \epsilon := \frac{c_* R}{4 \sigma \sqrt{mk}}.
\end{align*}
The covering number $N_\epsilon$ needed to cover $\mathcal{S}_R$ with $\epsilon$-balls in Frobenius norm satisfies
\begin{align*}
    N_\epsilon \le \left( 1 + \frac{2R}{\epsilon} \right)^{e m k} =\left( 1 +  \frac{16\sqrt{2\pi}\cdot \sqrt{mk}}{(1-2p)(1-p)^{s-1}} \right)^{e m k}.
\end{align*}

For each fixed grid point $\tilde K $ in the $\epsilon$-net, applying Lemma \ref{lem:psi_lower_bound} with failure probability $\frac{\delta}{2N_\epsilon}$ ensures that $\Psi_T(\tilde K) \ge c_* R T - \rho^s h T$ with probability at least $1 - \frac{\delta}{2N_\epsilon}$.

By a union bound over all $N_\epsilon$ grid points in the $\epsilon$-net, the pointwise lower bound holds simultaneously across the entire net with probability at least $1 - \frac{\delta}{2}$. Combining this with the  event in \eqref{Kinterbound} (which holds with probability at least $1 - \frac{\delta}{2}$), for any $K \in \mathcal{S}_R$, choosing a net point $\tilde K$ such that $\|K - \tilde K\|_F \le \epsilon$ yields
\begin{align*}
    \Psi_T(K) &\ge \Psi_T(\tilde K) - 2 T \sigma \sqrt{mk} \, \epsilon \\
    &\ge \left( c_* R T - \rho^s h T \right) - \frac{c_*}{2} R T = \frac{c_*}{4} R T
\end{align*}
with probability at least $1 - \delta$, where the last equality is from the definition of $R$.

To guarantee the union bound, we replace  $\delta $ in \eqref{eq:T_condition_psi} with  $\frac{\delta}{2N_\epsilon}$. Expanding the logarithm yields $\log(\frac{2N_\epsilon}{\delta}) = \mathcal{O}\big( e m k \log\big( \frac{mk}{(1-2p)(1-p)^{s-1}} \big) + \log(\frac{1}{\delta}) \big)=\mathcal{\tilde O}(emks)$, which leads to
the final time bound in \eqref{final_timebound}, hiding logarithmic factors. Note that  $T=\Omega\bigr( \frac{1}{m}\log\left(\frac{2}{\delta}\right)\bigr)$ required for \eqref{Kinterbound} is dominated by \eqref{final_timebound}. This completes the proof.
\end{proof}

\end{document}